\newcommand{\erase}[1]{}
\newtheorem{theorem}{Theorem}[section]
\newtheorem{lemma}[theorem]{Lemma}
\newtheorem{proposition}[theorem]{Proposition}
\newtheorem{corollary}[theorem]{Corollary}
\newtheorem{_algorithm}[theorem]{Algorithm}
\newtheorem{_definition}[theorem]{Definition}
\newenvironment{definition}{\begin{_definition}\rm}{\end{_definition}}
\newtheorem{_propositiondefinition}[theorem]{Proposition-Definition}
\newtheorem{_remark}[theorem]{\it Remark}
\newenvironment{remark}{\begin{_remark}\rm}{\end{_remark}}
\newtheorem{_example}[theorem]{Example}
\newenvironment{example}{\begin{_example}\rm}{\end{_example}}
\newtheorem{_assumption}[theorem]{Assumption}
\newtheorem{_construction}[theorem]{Construction}
\newtheorem{_claim}[theorem]{Claim}
\newtheorem{_conjecture}[theorem]{Conjecture}
\newtheorem{_problem}[theorem]{Problem}
\numberwithin{equation}{section}
\numberwithin{table}{section}
\numberwithin{figure}{section}
\renewcommand{\qed}{\hfill {$\Box$}}
\renewcommand{\P}{\mathord{\mathbb  P}}
\newcommand{\Q}{\mathord{\mathbb  Q}}
\newcommand{\R}{\mathord{\mathbb R}}
\newcommand{\Z}{\mathord{\mathbb Z}}
\newcommand{\CCC}{\mathord{\mathcal C}}
\newcommand{\LLL}{\mathord{\mathcal L}}
\newcommand{\NNN}{\mathord{\mathcal N}}
\newcommand{\PPP}{\mathord{\mathcal P}}
\newcommand{\RRR}{\mathord{\mathcal R}}
\newcommand{\mapdownsurj}{
\hbox{$\bigm\downarrow$}
\llap{\hbox{\raise 2pt\hbox{$\bigm\downarrow$}}}%
\vstrechmapdown
}
\newcommand{\mapupsurj}{
\hbox{$\bigm\uparrow$}
\llap{\hbox{\raise 2pt\hbox{$\bigm\uparrow$}}}%
\vstrechmapup
}
\newcommand{\inj}{\hookrightarrow}
\newcommand{\isom}{\xrightarrow{\sim}}
\newcommand{\set}[2]{\{\,{#1}\mid {#2} \,\}}
\newcommand{\gen}[1]{\langle {#1}  \rangle}
\newcommand{\ol}{\overline}
\newcommand{\tensor}{\otimes}
\newcommand{\sprime}{\sp\prime}
\newcommand{\spcirc}{\sp{\mathord{\circ}}}
\newcommand{\sperp}{\sp{\perp}}
\newcommand{\dual}{\sp{\vee}}
\newcommand{\semidirectproduct}{\rtimes}
\newcommand{\inv}{\sp{-1}}
\newcommand{\bdr}{\partial\,}
\newcommand{\OG}{\mathord{\mathrm{O}}}
\newcommand{\rank}{\operatorname{\mathrm{rank}}\nolimits}
\newcommand{\mystruth}[1]{\phantom{\hbox{\vrule height #1}}}
\newcommand{\mystrutd}[1]{\phantom{\hbox{\vrule depth #1}}}
\newcommand{\intf}[1]{\langle #1 \rangle}
\newcommand{\Rats}{\mathord{\mathrm{Rats}}}
\newcommand{\barPPP}{\overline{\PPP}}
\newcommand{\barCCC}{\overline{\CCC}}
\newcommand{\barNNN}{\overline{\NNN}}
\newcommand{\MW}{\mathrm{MW}}
\newcommand{\ADE}{\mathrm{ADE}}
\newcommand{\tilTheta}{\widetilde{\Theta}}
\newcommand{\tilSigma}{\widetilde{\Sigma}}
\newcommand{\intfL}[1]{\intf{#1}_L}
\newcommand{\zerosec}{\zeta}
\newcommand{\Leech}{\Lambda}
\newcommand{\Card}{\mathrm{Card}}
\newcommand{\XX}{\mathbb{X}}
\newcommand{\spm}{^{-}}
\newcommand{\intfNm}[1]{\intf{#1}_N\spm}
\newcommand{\Nm}{N\spm}
\newcommand{\LatII}{\mathord{I\hskip -.7pt I}}
\newcommand{\nullvars}{\phantom{i},\phantom{i}}
\newcommand{\Coinfty}{\mathord{\mathrm{Co}}_{\infty}}
\newcommand{\XXi}{\,\Xi}
\newcommand{\Lts}{L_{26}}
\newcommand{\tauN}{\tau(N)}
\newcommand{\code}{\varGamma}
\newcommand{\newLeech}{\varLambda}
\begin{document}

\title[Construction of the Leech lattice]
{A note on construction of the Leech lattice}

\author[Ichiro  Shimada]{Ichiro Shimada}
\address{Department of Mathematics, Graduate School of Science, Hiroshima University,
1-3-1 Kagamiyama, Higashi-Hiroshima, 739-8526 JAPAN}
\email{ichiro-shimada@hiroshima-u.ac.jp}

\begin{abstract}
In this paper, we present a method to construct
the Leech lattice from other Niemeier lattices.
\end{abstract}
\keywords{Leech lattice, Niemeier lattice, deep hole}

\subjclass[2020]{11H06}
\thanks{This work was supported by JSPS KAKENHI Grant Number~20H01798.}
\maketitle
\section{Introduction}\label{sec:introduction}
The Leech lattice 
is an important mathematical object.
Various constructions of this beautiful lattice are known.
In this paper, we give a method to construct
the Leech lattice as modifications of other Niemeier lattices.
\par
Let $N$ be a Niemeier lattice with the intersection form $\intf{\phantom{a}, \phantom{a}}_N$ . 
Suppose that $N$ is not isomorphic to the Leech lattice. 
Let $R$ be the set of vectors in $N$ of square-norm $2$, 
and $\gen{R}\subset N$  the sublattice generated by $R$.
Then $N/\gen{R}$ is a finite abelian group, which we call the \emph{code} of $N$.
(In~\cite{CSEnumeration1982Note}, it is called the \emph{glue code}.)
For each codeword of this code,
we give a method to construct the Leech lattice.
The main result is given in Theorem~\ref{thm:constC}.
\par
Considering the case where this codeword is $0$,
we obtain the following.
Let $N$ and $R$ be as above.
We choose a simple root system $\Theta$ of $R$,
that is, $\Theta$ is a set of vectors in $R$ that form a basis of $\gen{R}$ 
with the dual graph being a Dynkin diagram of an ordinary $\ADE$-type.
Then we have a vector $\rho\in  N\tensor\Q$ such that $\intf{r, \rho}_N=-1$ for all  $r\in \Theta$.
In fact, we have $\rho\in N$.
Let $h$ be the Coxeter number of $N$.
\begin{corollary}\label{cor:zerogamma} 
The $\Z$-module 
\[
\set{u\in N}{\intf{u, \rho}_N \equiv 0 \bmod 2h+1}
\]
 with the quadratic form
\[
u\mapsto \intf{u, u}_N-\frac{2}{(2h+1)^2}\,(\intf{u, \rho}_N)^2
\]
is isomorphic to the Leech lattice.
\qed
\end{corollary}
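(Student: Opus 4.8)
Set $m = 2h+1$, let $L$ be the $\Z$-module in the statement, write $q(u) = \intf{u, u}_N - \tfrac{2}{m^2}(\intf{u, \rho}_N)^2$, and let $b(u,v) = \intf{u, v}_N - \tfrac{2}{m^2}\intf{u, \rho}_N\,\intf{v, \rho}_N$ be the associated symmetric bilinear form. The plan is to verify that $(L, q)$ is an even, unimodular, positive-definite lattice of rank $24$ having no vector of square-norm $2$, and then to invoke Niemeier's classification, by which the Leech lattice is the unique such lattice.

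First I would record two numerical inputs. Since $\Theta \subset N$ and $\intf{r, \rho}_N = -1$ for $r \in \Theta$, the homomorphism $\phi\colon N \to \Z$, $u \mapsto \intf{u, \rho}_N$, is surjective; hence $L = \phi^{-1}(m\Z)$ has rank $24$ and index $m$ in $N$. Next, decomposing $\rho$ into its components along the irreducible summands of $\gen{R}$ and applying the Freudenthal--de Vries strange formula to each summand (all summands are simply laced with the common Coxeter number $h$, and their ranks sum to $24$) gives $\intf{\rho, \rho}_N = 2h(h+1)$, so that $m^2 - 2\intf{\rho, \rho}_N = 1$.

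I would then check the lattice axioms. For $u, v \in L$, writing $\intf{u, \rho}_N = mk$ and $\intf{v, \rho}_N = m\ell$ with $k,\ell \in \Z$, we obtain $b(u,v) = \intf{u, v}_N - 2k\ell \in \Z$ and $q(u) = \intf{u, u}_N - 2k^2 \in 2\Z$, so $(L, b)$ is integral and $q$ is even. Projecting $u = u_\perp + \tfrac{\intf{u, \rho}_N}{\intf{\rho, \rho}_N}\rho$ orthogonally onto $\rho^\perp$ and using $m^2 - 2\intf{\rho,\rho}_N = 1$ rewrites the form as $q(u) = \intf{u_\perp, u_\perp}_N + \tfrac{(\intf{u, \rho}_N)^2}{m^2\,\intf{\rho, \rho}_N}$, which is strictly positive for $u \neq 0$; thus $q$ is positive-definite. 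Finally, the matrix-determinant lemma gives Gram determinant $1 - \tfrac{2\intf{\rho, \rho}_N}{m^2} = m^{-2}$ for $b$ on a basis of $N$, and multiplying by $[N:L]^2 = m^2$ shows that $(L, b)$ is unimodular. Hence $(L, q)$ is a Niemeier lattice.

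The heart of the matter, and the step I expect to be the main obstacle, is to show that $q(u) = 2$ has no solution. Such a $u$ would satisfy $\intf{u, u}_N = 2 + 2k^2$ with $\intf{u, \rho}_N = mk$, and the positivity formula forces $k^2 \le 4h(h+1)$, i.e.\ $|k| \le 2h$. The case $k = 0$ is immediate: every vector of $N$ of square-norm $2$ lies in $R$, and for $r \in R$ the value $\intf{r, \rho}_N$ is the negative of the height of $r$ with respect to $\Theta$, which never vanishes; so no root of $N$ is orthogonal to $\rho$. The remaining cases $1 \le |k| \le 2h$ are the crux: here I would reformulate the requirement as the assertion that, for each such $k$, the coset $\phi^{-1}(mk)$ of $\ker\phi$ in $N$ contains no vector of square-norm $2 + 2k^2$, equivalently that the minimal square-norm on this coset exceeds $2 + 2k^2$. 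This is exactly where the fine geometry of $N$ must enter, through the interplay of $\rho$, the Coxeter number, and the short vectors of $N$ --- that is, through the deep-hole description of $N$ relative to the Leech lattice. I expect this to be the genuinely substantive computation, presumably the content specialized from Theorem~\ref{thm:constC}; once it is in hand, Niemeier's classification identifies $(L, q)$ with the Leech lattice.
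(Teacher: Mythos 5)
Your reduction to Niemeier's classification is sound as far as it goes: the computation of the rank and of the index $2h+1$ of $L$ in $N$, the identity $(2h+1)^2-2\intf{\rho,\rho}_N=1$ coming from $\intf{\rho,\rho}_N=2h(h+1)$ (equation~\eqref{eq:rhonorm} of the paper), and the verifications of integrality, evenness, positive-definiteness and unimodularity are all correct. But the step you yourself flag as ``the crux'' is a genuine gap, not a routine computation to be filled in: showing that $q(u)=2$ has no solution --- equivalently, that for every $k$ with $1\le|k|\le 2h$ the coset $\set{u\in N}{\intf{u,\rho}_N=(2h+1)k}$ contains no vector of square-norm $2k^2+2$ --- is precisely the content that distinguishes the Leech lattice from the other $23$ Niemeier lattices. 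Everything you prove before that point would hold verbatim if the conclusion were ``$(L,q)$ is some Niemeier lattice,'' so the proposal as written does not establish the corollary.

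The paper closes exactly this gap by a different route: the corollary is obtained as the case $\gamma=0$ of Theorem~\ref{thm:constC}. One embeds $N\spm$ into $L_N=U_N\oplus N\spm\cong\LatII_{1,25}$, proves (Proposition~\ref{prop:wN}) that $w_N=(h+1,h,\rho)_N$ corresponds to the Weyl vector $w_{\Leech}$ under an isometry $L_{\Leech}\cong L_N$ built from a deep hole of type $\tauN$, and concludes from Conway's theorem that $(\Z w_N)\sperp/\Z w_N\cong\Leech\spm$; your module $L$ with the form $q$ is then exhibited as the image of $U(w_N,z_N)\sperp$ under the projection $L_N\to N\spm$, so the absence of roots comes for free from the definition of a Weyl vector. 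The essential input is the covering-radius theorem of Conway--Parker--Sloane together with Conway's determination of the fundamental domain of the Weyl group of $\LatII_{1,25}$; some input of this strength is unavoidable, and a direct coset-by-coset lower bound on minimal norms of the kind you sketch would amount to reproving it case by case for each of the $23$ types $\tauN$. If you wish to keep your ``verify the axioms, then exclude roots'' framework, the missing step should be supplied by citing Proposition~\ref{prop:wN} (equivalently, Borcherds's result that $(h+1,h,\rho)$ is a Weyl vector), not by elementary estimates.
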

Our construction is similar to the twenty-three holy constructions 
of Conway and Sloane~\cite{CS1982Note}. 
See also~\cite{CSLoretzian1982Note}.
In the proof of our construction, 
the classification of deep holes of the Leech lattice~\cite{CPS1982Note}
and 
the determination of the fundamental domain of the Weyl group 
of the even unimodular Lorentzian lattice $\LatII_{25, 1}$~\cite{Conway1983Note}
play important roles.
Some ideas 
in our construction have already appeared in Borcherds~\cite{Borcherds1985}.
\par
A novelty of our approach is that we use the geometry of $K3$ surfaces 
as a heuristic guide.
Let $\XX$ be a $K3$ surface whose N\'eron-Severi lattice $S_{\XX}$
is an even unimodular lattice $\LatII_{1,25}$ of signature $(1,25)$.
It is needless to say that such a $K3$ surface  $\XX$ does \emph{not} exist.
Applying the lattice theoretic tools for the study of $K3$ surfaces to this \emph{virtual} $K3$ surface $\XX$,
however, 
we can rephrase the above mentioned results of Conway et al.~\cite{Conway1983Note, CPS1982Note}
in terms of geometry of $K3$ surfaces.
Then, using an algorithm
that we have developed in~\cite{Shimada2022} for the study of $K3$ surfaces, 
we obtain our construction.
\par
Note that, in the proof of our main result (Theorem~\ref{thm:constC}), we do not use $\XX$,
and hence 
the result is rigorously correct.
In fact, we can verify our result by direct computations.
See Remark~\ref{rem:canonicalreps}.
Computational data relevant to this paper is presented in~\cite{compdataLN}
in the format of {\tt GAP}~\cite{GAP}. 
\par \medskip
{\bf Notation.}
A lattice is a free $\Z$-module of finite rank
with a non-degenerate symmetric bilinear form that takes values in $\Z$.
This bilinear form is called the \emph{intersection form} of the lattice.
For a lattice $M$ with the intersection form $\intf{\nullvars}_M$,
let $M\spm$ denote the lattice whose underlying $\Z$-module is $M$
and whose intersection form is equal to $\intf{\nullvars}\spm_M:=-\intf{\nullvars}_M$.
\par
Let $\Leech$ be the Leech lattice.
In this paper, we construct 
the \emph{negative-definite} Leech lattice $\Leech\spm$ from 
\emph{negative-definite} Niemeier lattices $N\spm$ using 
the result on the lattice $\LatII_{25, 1}\spm=\LatII_{1,25}$.
We make this change of sign 
because we want to use ideas coming from the geometry of algebraic surfaces.
\section{Preliminaries}\label{sec:Preliminaries}
\subsection{Roots and reflections}\label{subsec:roots}
A lattice $M$ is \emph{even} if $\intf{v,v}_M\in 2\Z$ holds for all $v\in M$.
Let $M$ be an even lattice.
A vector $r$ of $M$ is said to be a \emph{root} if $|\intf{r,r}_M|=2$.
A lattice is said to be a \emph{root lattice} if it is generated by roots.
A root $r$ of $M$ with $\intf{r, r}_M=\pm 2$ defines the \emph{reflection}
\[
s_r\colon x\mapsto x\mp \intf{x, r}_M\cdot  r,
\]
which is an element of the automorphism group $\OG(M)$ of the lattice $M$.
Let $M\dual$ denote the \emph{dual lattice} 
\[
\set{x\in M\tensor\Q}{\intf{x, v}_M\in \Z \;\;\textrm{for all}\;\; v\in M}.
\]
The \emph{discriminant group} of $M$ is 
the finite abelian group $M\dual/M$.
The group $\OG(M)$ acts on $M\dual$, and hence on $M\dual/M$.
Since $s_r(x)-x\in M$ holds for any $x\in M\dual$,
the reflections $s_r\in \OG(M)$ act on $M\dual/M$ trivially.
\subsection{ $\ADE$-configurations}\label{subsec:Theta}
A \emph{$(-2)$-vector} of an even lattice $M$ is a vector $r\in M$ such that $\intf{r,r}_M=-2$.
Let $\{r_1, \dots, r_m\}$ be a set of $(-2)$-vectors of an even lattice $M$
such that $\intf{r_i, r_j}_M \in \{0, 1, 2\}$ holds for any $i, j$ with $i\ne j$.
The \emph{dual graph} of $\{r_1, \dots, r_m\}$ is the graph whose set of nodes is $\{r_1, \dots, r_m\}$
and whose set of simple edges (resp.~of double edges) is the set of pairs $\{r_i, r_j\}$ such that $\intf{r_i, r_j}_M=1$ (resp.~$\intf{r_i, r_j}_M=2$).
(A double edge appears only in extended $\ADE$-configuration whose $\ADE$-type contains $A_1$.)
We say that $\{r_1, \dots, r_m\}$ forms an \emph{ordinary $\ADE$-configuration} (resp.~an \emph{extended $\ADE$-configuration})
if the dual graph is the ordinary Dynkin diagram (resp.~the extended Dynkin diagram) of type $\ADE$,
and in this case, we define 
the type of the configuration 
to be the type of the dual graph.
 \par
 Let $M$ be a negative-definite root lattice, and 
$R$ the set of $(-2)$-vectors in $M$.
 We have $M=\gen{R}$.
A \emph{simple root system} of  $M$ is a subset of $R$ that is a basis of  $M$ and 
that forms an ordinary $\ADE$-configuration.
A negative-definite root lattice always has a simple root system.
The set of simple root systems of $M$ is described as follows.
 We denote by $W(M)$  
the subgroup of $\OG(M)$
generated by all the reflections 
$s_r$ with respect to $r\in R$,
and call it the \emph{Weyl group} of $M$.
We consider the unit sphere
\[
S:=\set{x\in M\tensor\R}{\intf{x,x}_M=-1}.
\]
For $r\in R$, we put
\[
(r)\sperp:=\set{x\in S}{\intf{x, r}_M=0},
\quad
H^{+}(r):=\set{x\in S}{\intf{x, r}_M>0},
\]
and set 
\[
S\spcirc:=S\; \setminus\;\bigcup_{r\in R}\; (r)\sperp.
\]
Then $W(M)$ acts on the set of connected components of $S\spcirc$ 
simple-transitively.
A subset $\Theta$ of $R$ is a simple root system of $M$
if and only if the space 
\[
\Delta(\Theta):=\bigcap_{r\in \Theta}\; H^{+}(r)
\]
is a connected component of $S\spcirc$ and, 
for each $r\in \Theta$,
the intersection 
 $(r)\sperp\cap \ol{\Delta(\Theta)}$ contains 
a non-empty open subset of $(r)\sperp$, where $\ol{\Delta(\Theta)}$
is the closure of $\Delta(\Theta)$ in $S$.
Therefore 
$W(M)$ acts on the set of simple root systems of $M$
simple-transitively.
\subsection{The coefficients of the highest root}\label{subsec:m}
Let $\tilSigma$ be a set of $(-2)$-vectors 
that form an extended $\ADE$-configuration of type $\tau$
with the dual graph being \emph{connected},
that is, we have $\tau=A_l$ or $\tau=D_m$ or $\tau=E_n$.
We choose a vector $\theta\in \Sigma$ such that $\Sigma:=\tilSigma\setminus \{\theta\}$ 
forms an ordinary $\ADE$-configuration of type $\tau$.
Then $\Sigma$ is a simple root system
of the negative-definite root lattice $\gen{\Sigma}$.
Let  $\mu\in \gen{\Sigma}$ be the highest root with respect to $\Sigma$ (see~\cite[Section 1.5]{Ebeling2013} for the definition).
We define a function $m\colon \tilSigma \to \Z_{>0}$ by
\[
m(r):=\begin{cases}
1 & \textrm{if $r=\theta$}, \\
\textrm{the coefficient of $r$ in $\mu$} &\textrm{if $r\in \Sigma$}.
\end{cases}
\]
In fact, 
the function $m$ 
does not depend on the choice of $\theta \in \tilSigma$.
We have $m(r)=1$ if and only if $\tilSigma\setminus \{r\}$ forms an ordinary $\ADE$-configuration of type $\tau$.
The values of the function $m$ for each connected $\ADE$-type $\tau$ 
can be found in~\cite[Figure 23.1]{CSbook} or~in~\cite[Figure 1.8]{Ebeling2013}. 
\par
Let $\Z^{\tilSigma}$ be the $\Z$-module of functions $\tilSigma\to \Z$.
We can also define $m$ as 
the function $\tilSigma\to \Z$ that takes values in $\Z_{>0}$ and that is 
the generator of the following $\Z$-submodule of of $\Z^{\tilSigma}$:
\[
\set{(\;x(r)\;)\in \Z^{\tilSigma}}{\textstyle{\sum_{r\in \tilSigma} }\;x(r) \intf{r, r\sprime}=0 \;\;\textrm{for all $r\sprime\in \tilSigma$}}.
\]
\subsection{Hyperbolic lattices}\label{subsec:hyperboloc}
We say that
a lattice $M$ of rank $n>1$ is \emph{hyperbolic} if its signature is  $(1, n-1)$.
\begin{example}\label{example:Ln}
If $n$ is a positive integer satisfying  $n\equiv 2 \bmod 8$,
then there exists an even unimodular hyperbolic lattice $L_n$ of rank $n$, and $L_n$ is unique up to isomorphism.
The lattice $L_{26}\cong \LatII_{1,25}$ plays a central role in this paper.
\end{example}
\begin{example}\label{example:U}
The \emph{hyperbolic plane}  $U$  is 
the even unimodular hyperbolic lattice $L_2$ of rank $2$.
We fix a basis $u_0, u_1$ of $U$ such that the Gram matrix of $U$ with respect to $u_0, u_1$ is 
\[\left[\begin{array}{cc} 
0 & 1 \\ 
1 & 0 
\end{array}\right].
\]
\end{example}
Let $M$ be an even hyperbolic lattice.
A \emph{positive cone} of $M$ is 
one of the two connected components of 
the space $\set{x\in M\tensor \R}{\intf{x, x}_M>0}$.
Let $\PPP_M$ be a positive cone of $M$.
We denote by $\barPPP_M$ the closure of $\PPP_M$ in $M\tensor\R$,
and by $\bdr\barPPP_M$ the boundary $\barPPP_M\setminus \PPP_M$.
We put
\[
\RRR_M:=\set{r\in M}{\intf{r,r}_M=-2}, 
\]
and, for $r\in \RRR_M$, let $(r)\sperp$ be the hyperplane 
$\set{x\in \PPP_M}{\intf{x, r}_M=0}$ in $\PPP_M$.
The \emph{Weyl group} $W(M)$ 
is a subgroup of $\OG(M)$ generated by all the reflections 
$s_r$
associated with $r\in \RRR_M$.
Then $W(M)$ acts on $\PPP_M$.
\begin{definition}
A \emph{standard fundamental domain of $W(M)$} is the closure in $\PPP_M$ of a connected component 
of 
\[
\PPP_M\;\setminus\; \bigcup_{r\in  \RRR_M} (r)\sperp. 
\]
Let $D$ be a standard fundamental domain of $W(M)$.
We say that $r\in \RRR_M$ \emph{defines a wall of $D$}
 if $D\cap (r)\sperp$ contains a non-empty open subset of 
 $(r)\sperp$ and $\intf{x, r}_M\ge 0$ holds for all $x\in D$.
 \end{definition}
Note that, contrary to the case where $M$ is a negative-definite root lattice,
a standard fundamental domain of $W(M)$ in the positive cone of hyperbolic lattice may have infinitely many walls.
\section{Niemeier lattices}\label{sec:Niemeier}
A \emph{Niemeier lattice} is an even positive-definite unimodular lattice of rank $24$.
Niemeier~\cite{Niemeier} classified 
Niemeier lattices.
It was shown in~\cite{Niemeier} that, up to isomorphism, 
there exist exactly $24$ Niemeier lattices, 
that one of them contains no roots,
whereas each of the other $23$ lattices contains a sublattice of finite index generated by roots.
See also~\cite{CSEnumeration1982Note}.
The Niemeier lattice containing no roots is called the \emph{Leech lattice},
and is denoted by $\Leech$.
\par
Let $N$ be a Niemeier lattice with roots.
Let $R$ be the set of $(-2)$-vectors of $\Nm$,
and $\gen{R}$  the sublattice of $\Nm$ generated by $R$.
Then the \emph{code}  $\Nm/\gen{R}$ of $N\spm$ is a subgroup of the discriminant group $\gen{R}\dual/\gen{R}$.
Let $\Theta\subset R$ be a simple root system of $\gen{R}$, 
and 
let $\tauN$ denote the $\ADE$-type of   $\Theta$.
We call $\tauN$ the \emph{$\ADE$-type} of $N\spm$.
Note that  $\OG(\Nm)$ is a subgroup of $\OG(\gen{R})$.
More precisely, we have 
\[
\OG(\Nm)=\set{g \in \OG(\gen{R})}{\textrm{the action of $g$ on $\gen{R}\dual/\gen{R}$ preserves $\Nm/\gen{R}$}}.
\]
Since the subgroup $W(\gen{R})$  of $\OG(\gen{R})$ 
acts on $\gen{R}\dual/\gen{R}$ trivially,
we see that  $W(\gen{R})$ is contained in $\OG(\Nm)$.
Since $W(\gen{R})$ acts on the set of simple root systems of $\gen{R}$
transitively, we have the following:
\begin{proposition}\label{prop:transitive}
The group $\OG(\Nm)$ acts on the set of simple root systems of $\gen{R}$  transitively.
\qed
\end{proposition}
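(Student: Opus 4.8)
The plan is to deduce the statement from the transitivity of the Weyl group $W(\gen{R})$ established in Section~\ref{subsec:Theta}, combined with the inclusion $W(\gen{R})\subseteq \OG(\Nm)$ recorded in the paragraph preceding the proposition. First I would check that $\OG(\Nm)$ genuinely acts on the set of simple root systems of $\gen{R}$. Since $\OG(\Nm)$ is a subgroup of $\OG(\gen{R})$, it suffices to verify that an arbitrary $g\in \OG(\gen{R})$ carries simple root systems to simple root systems. This is immediate: for a simple root system $\Theta$, the image $g(\Theta)$ is again a $\Z$-basis of $\gen{R}$ because $g$ is a lattice automorphism; it consists of $(-2)$-vectors because $\intf{g(r),g(r)}_{\gen{R}}=\intf{r,r}_{\gen{R}}=-2$; and its dual graph coincides with that of $\Theta$ because $g$ preserves all the pairings $\intf{r_i,r_j}_{\gen{R}}$ that define the edges. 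Hence $g(\Theta)$ is an ordinary $\ADE$-configuration of the same type, i.e.\ a simple root system.

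Next I would invoke the two key facts already available. From Section~\ref{subsec:Theta}, the Weyl group $W(\gen{R})$ of the negative-definite root lattice $\gen{R}$ acts \emph{simple-transitively} on the set of simple root systems of $\gen{R}$; in particular this action is transitive. From the discussion just above the proposition, the reflections $s_r$ act trivially on the discriminant group $\gen{R}\dual/\gen{R}$, so $W(\gen{R})$ preserves the subgroup $\Nm/\gen{R}$, and the explicit description $\OG(\Nm)=\set{g\in\OG(\gen{R})}{g \text{ preserves } \Nm/\gen{R}}$ then yields $W(\gen{R})\subseteq \OG(\Nm)$.

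Combining these gives the result at once: the subgroup $W(\gen{R})$ of $\OG(\Nm)$ already acts transitively on the set of simple root systems of $\gen{R}$, so \emph{a fortiori} the larger group $\OG(\Nm)$ does as well. There is no real obstacle here; the statement is a formal consequence of the two ingredients. The only point that deserves attention is the inclusion $W(\gen{R})\subseteq \OG(\Nm)$, which rests on the fact that reflections act trivially on $\gen{R}\dual/\gen{R}$ — but this is precisely what was proved in Section~\ref{subsec:roots}, so the argument is complete.
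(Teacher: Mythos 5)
Your proof is correct and follows exactly the paper's own argument: the paper derives the proposition from the inclusion $W(\gen{R})\subseteq \OG(\Nm)$ (via the triviality of the reflection action on $\gen{R}\dual/\gen{R}$) together with the transitivity of $W(\gen{R})$ on simple root systems established in Section~\ref{subsec:Theta}. Your additional verification that $\OG(\gen{R})$ permutes the simple root systems is a harmless extra detail the paper leaves implicit.
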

We write 
\begin{equation}\label{eq:taudecomp}
\tauN=\tauN_1+\cdots +\tauN_K,
\end{equation}
where  $\tauN_1, \dots, \tauN_K$ are the $\ADE$-types of the connected components of  the dual graph of $\Theta$.
Accordingly, we obtain the decompositions
\begin{equation}\label{eq:ThetaRRdecomp}
\Theta=\Theta_1\sqcup\dots\sqcup \Theta_K,
\quad
R=R_1\sqcup\dots\sqcup R_K,
\end{equation}
in such a way that  
$\Theta_i\subset R_i$ is a simple root system 
of the root lattice $\gen{R_i}$ of type $\tauN_i$.
We put
\[
n_i:=\Card(\Theta_i)=\rank{\gen{R_i}}.
\]
Then we have $24=n_1+\dots+n_K$.
\begin{definition}\label{def:Coxeter number}
The \emph{Coxeter number} $h_i$ of $R_i$ 
is defined by any of  the following:
\begin{enumerate}[(a)]
\item \label{h:a} $\Card(R_i) =n_i h_i$.
\item  \label{h:b}  Let $\rho_i\in \gen{R_i}\tensor\Q$ be the vector satisfying $\intfNm{\rho_i, r}=1$ for all $r\in \Theta_i$.
Then we have $\intfNm{\rho_i, \rho_i}=-n_i h_i (h_i+1)/12$.
\item  \label{h:c}  
Let $\mu_i\in R_i$ denote the highest root with respect to $\Theta_i$.
(See Section 1.5 of~\cite{Ebeling2013}).
Then we have $h_i=\intfNm{\mu_i, \rho_i}+1$, that is,
$h_i-1$ is the sum of the coefficients $m(r)$ of $\mu_i$ expressed as a linear combination of vectors $r\in \Theta_i$,
where $m$ is the function defined in Section~\ref{subsec:m}.
\item  \label{h:d}  The product of all the reflections $s_r$ with respect to $r\in \Theta_i$ (where the product is taken in arbitrary order)
is of order $h_i$ in $\OG(\gen{R_i})$.
\end{enumerate}
\end{definition}
A remarkable fact  about the  $\ADE$-type $\tauN$  is that 
 $h_i$ does not depend on $i$. 
 We put
\[
h:=h_1=\dots=h_K,
\]
and call it the \emph{Coxeter number} of $\Nm$.
(See Table~\ref{table:N}.)
We also put
\[
\rho:=\rho_1+\dots+\rho_K, 
\]
which is called a \emph{Weyl vector} of $N\spm$.
By property (b) of $h$ above,  we have 
\begin{equation}\label{eq:rhonorm}
\intfNm{\rho, \rho}=-2h(h+1).
\end{equation}
\begin{remark}\label{rem:rho}
The Weyl vector  $\rho \in \gen{R}\tensor\Q=\Nm\tensor\Q$ is 
in fact
a vector of  $\Nm$.
This fact can be easily confirmed by direct computation.
Borcherds~\cite{Borcherds1985} gave a proof of this fact.
See also Remark~\ref{rem:rho2}.
\end{remark}
\begin{table}
{\small 
\[
\begin{array}{lllll}
\textrm{No.} \phantom{aa} & \tau \phantom{aaaa} & h \phantom{aaaa}& N\spm /\gen{R} \phantom{aaaa}& \gen{R}\dual /\gen{R}\mystrutd{7pt}\\ 
 \hline 
1 & D_{24} & 46 & \Z/2\Z & (\Z/2\Z)^{2} \mystruth{11pt} \\ 
2 & 3E_{8} & 30 & 0 & 0 \\ 
3 & D_{16}+E_{8} & 30 & \Z/2\Z & (\Z/2\Z)^{2} \\ 
4 & A_{24} & 25 & \Z/5\Z & \Z/25\Z \\ 
5 & 2D_{12} & 22 & (\Z/2\Z)^{2} & (\Z/2\Z)^{4} \\ 
6 & A_{17}+E_{7} & 18 & \Z/6\Z & \Z/2\Z\times \Z/18\Z \\ 
7 & D_{10}+2E_{7} & 18 & (\Z/2\Z)^{2} & (\Z/2\Z)^{4} \\ 
8 & A_{15}+D_{9} & 16 & \Z/8\Z & \Z/4\Z\times \Z/16\Z \\ 
9 & 3D_{8} & 14 & (\Z/2\Z)^{3} & (\Z/2\Z)^{6} \\ 
10 & 2A_{12} & 13 & \Z/13\Z & (\Z/13\Z)^{2} \\ 
11 & 4E_{6} & 12 & (\Z/3\Z)^{2} & (\Z/3\Z)^{4} \\ 
12 & A_{11}+D_{7}+E_{6} & 12 & \Z/12\Z & \Z/3\Z\times \Z/4\Z\times \Z/12\Z \\ 
13 & 4D_{6} & 10 & (\Z/2\Z)^{4} & (\Z/2\Z)^{8} \\ 
14 & 2A_{9}+D_{6} & 10 & \Z/2\Z\times \Z/10\Z & (\Z/2\Z)^{2}\times (\Z/10\Z)^{2} \\ 
15 & 3A_{8} & 9 & \Z/3\Z\times \Z/9\Z & (\Z/9\Z)^{3} \\ 
16 & 2A_{7}+2D_{5} & 8 & \Z/4\Z\times \Z/8\Z & (\Z/4\Z)^{2}\times (\Z/8\Z)^{2} \\ 
17 & 4A_{6} & 7 & (\Z/7\Z)^{2} & (\Z/7\Z)^{4} \\ 
18 & 6D_{4} & 6 & (\Z/2\Z)^{6} & (\Z/2\Z)^{12} \\ 
19 & 4A_{5}+D_{4} & 6 & \Z/2\Z\times (\Z/6\Z)^{2} & (\Z/2\Z)^{2}\times (\Z/6\Z)^{4} \\ 
20 & 6A_{4} & 5 & (\Z/5\Z)^{3} & (\Z/5\Z)^{6} \\ 
21 & 8A_{3} & 4 & (\Z/4\Z)^{4} & (\Z/4\Z)^{8} \\ 
22 & 12A_{2} & 3 & (\Z/3\Z)^{6} & (\Z/3\Z)^{12} \\ 
23 & 24A_{1} & 2 & (\Z/2\Z)^{12} & (\Z/2\Z)^{24}  
\end{array}

\]
}
\vskip .2cm
\caption{Niemeier lattices with roots}\label{table:N}
\end{table}
%
%
\section{Deep holes}\label{sec:Deepholes}
In this section, 
we review the classification of deep holes of the Leech lattice $\Leech$  
due to Conway, Parker, Sloane~\cite{CPS1982Note},
and the determination of the fundamental domain of the Weyl group $W(\Lts)$
of $\Lts\cong \LatII_{1,25}$
due to Conway~\cite{Conway1983Note}.
\par
For $x, y\in \Leech\tensor\R$, we put
\[
d(x, y):=\sqrt{\intf{x-y, x-y}_{\Leech}}\;, \quad \textrm{and}\quad 
d(x, \Leech):=\min{}_{\lambda\in \Leech} \,d(x, \lambda).
\]
The \emph{covering radius} of $\Leech$ is defined to be 
the maximum of $d(x, \Leech)$,
where $x$ runs through $\Leech\tensor\R$.
In~\cite{CPS1982Note}, the following was proved:
\begin{theorem}\label{thm:coveringradius}
The covering radius of $\Leech$ is $\sqrt{2}$.
\qed
\end{theorem}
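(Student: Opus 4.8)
The plan is to prove the two bounds $\kappa \le \sqrt 2$ and $\kappa \ge \sqrt 2$ separately, where $\kappa$ denotes the covering radius of $\Leech$. For the lower bound it suffices to exhibit a single point at distance exactly $\sqrt 2$ from $\Leech$. A convenient source is a Niemeier lattice $N$ with roots all of whose components $R_1, \dots, R_K$ share the Coxeter number $h$ (such lattices exist by Section~\ref{sec:Niemeier}): via the associated holy construction one places lattice points $v_j \in \Leech$ at the nodes of the extended $\ADE$ diagrams of the components, takes $x$ to be their common mark-weighted centroid (the marks $m(\cdot)$ of Section~\ref{subsec:Theta} summing to $h$ on each extended diagram), and verifies $\intf{v_j - x, v_j - x}_\Leech = 2$ for every node. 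One then checks that no lattice point is nearer, so $d(x, \Leech) = \sqrt 2$ and $\kappa \ge \sqrt 2$.

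For the upper bound, let $x \in \Leech \tensor \R$ be a deepest hole, i.e. a point with $d(x, \Leech) = \kappa$, and let $V := \set{v \in \Leech}{d(x, v) = \kappa}$ be its set of vertices. The set $V$ must affinely span $\Leech \tensor \R$, for otherwise $x$ could be translated away from the affine hull of $V$ so as to increase $d(\cdot, \Leech)$, contradicting the maximality of $\kappa$; in particular $0$ lies in the convex hull of the vectors $w_v := v - x$, yielding an affine relation $\sum_v c_v w_v = 0$ with $c_v \ge 0$ and $\sum_v c_v = 1$. All vertices are equidistant, so $\intf{w_v, w_v}_\Leech = \kappa^2$, and $\intf{w_v, w_{v'}}_\Leech = \kappa^2 - \tfrac12\,\intf{v - v', v - v'}_\Leech$; since a nonzero vector of $\Leech$ has norm at least $4$, these off-diagonal inner products are governed by the Leech distances between the vertices.

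The heart of the matter is to show that, joining two vertices $v, v'$ precisely when $\intf{v-v', v-v'}_\Leech = 6$, the graph on $V$ is a disjoint union of extended $\ADE$ diagrams and that the $w_v$ are the corresponding affine simple roots. The minimal norm $4$ of $\Leech$ and the absence of roots in $\Leech$ confine each $\intf{w_v, w_{v'}}_\Leech$ to the values permitted by a simply-laced affine Cartan matrix, and the affine relation $\sum_v c_v w_v = 0$ supplies the extended node of each connected component. Combining this structure with the requirement that every difference $v - v'$ lie in $\Leech$ pins the common norm $\intf{w_v, w_v}_\Leech = \kappa^2$ to the value $2$, whence $\kappa = \sqrt 2$ and the upper bound follows. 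This combinatorial classification is the main obstacle, and it is exactly where the classification of Niemeier lattices from Section~\ref{sec:Niemeier} becomes indispensable: because $V$ affinely spans, the resulting affine root system has total rank $24$, so it must be the affinization of one of the $23$ Niemeier root systems, all of whose components then share a single Coxeter number $h$, in perfect agreement with the deep hole produced for the lower bound.
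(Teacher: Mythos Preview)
The paper does not prove Theorem~\ref{thm:coveringradius}: it is quoted from~\cite{CPS1982Note} and closed with a \qed, so there is no in-paper argument to compare against. Your sketch follows the broad shape of the Conway--Parker--Sloane proof, but the decisive step is only named, not carried out.

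For the lower bound, invoking the holy construction is delicate here: in this paper (and in~\cite{CS1982Note}) those constructions are developed \emph{after} the deep-hole analysis of~\cite{CPS1982Note}, which itself rests on Theorem~\ref{thm:coveringradius}. You would need to exhibit an explicit point $x\in\Leech\otimes\Q$ and verify $d(x,\Leech)=\sqrt 2$ by an independent computation; you acknowledge this check but do not perform it.

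The substantive gap is in the upper bound. You claim that ``the minimal norm $4$ of $\Leech$ and the absence of roots confine each $\intf{w_v,w_{v'}}_\Leech$ to the values permitted by a simply-laced affine Cartan matrix''; but with $\intf{w_v,w_v}_\Leech=\kappa^2$ and $\intf{w_v,w_{v'}}_\Leech=\kappa^2-\tfrac12\intf{v-v',v-v'}_\Leech$, the off-diagonal values need not lie in $\{0,-1,-2\}$, nor even be integers, unless one already knows $\kappa^2=2$. Likewise the assertion that integrality of the differences $v-v'\in\Leech$ ``pins $\kappa^2$ to $2$'' is not argued. In~\cite{CPS1982Note} these are exactly the two hard steps: first a local combinatorial analysis (under the standing hypothesis $\kappa\ge\sqrt 2$) rules out forbidden subconfigurations and forces each component of the hole diagram to be an extended Dynkin diagram; then a volume comparison involving the Coxeter numbers of those components yields $\kappa=\sqrt 2$. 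Your outline correctly identifies where the work lies but does not supply it; as written, the upper-bound argument assumes what it sets out to prove.
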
 
Using Vinberg's algorithm~\cite{Vinberg1975}
and Theorem~\ref{thm:coveringradius}, 
Conway~\cite{Conway1983Note} proved the following.
Let  $\intfL{\phantom{a},\phantom{a}}$ denote  the intersection form of $\Lts\cong \LatII_{1,25}$.
Let $\RRR_L$ be the set of $(-2)$-vectors of $\Lts$.
We choose a positive cone $\PPP_L$ of $\Lts$.
\begin{definition}
We call 
a standard fundamental domain of $W(\Lts)$ in $\PPP_L$ 
a \emph{Conway chamber}.
A non-zero primitive vector $w\in \Lts$ is called a \emph{Weyl vector}
if $w\in  \bdr\barPPP_L$ (in particular, 
we have $\intf{w, w}_L=0$) and the lattice $(\Z w)\sperp/\Z w$ is isomorphic to $\Leech\spm$.
For a Weyl vector $w$, we put
\begin{eqnarray*}
\LLL(w) &:=& \set{r\in \RRR_L}{\intfL{w, r}=1},\\
\CCC(w) &:=& \set{x\in \PPP_L}{\intfL{x, r}\ge 0\;\;\textrm{for all}\;\; r\in \LLL(w)}.
\end{eqnarray*}
An element of $\LLL(w)$ is called a \emph{Leech root} of the Weyl vector $w$.
\end{definition}
\begin{theorem}[Conway~\cite{Conway1983Note}]\label{thm:ConwayChamber}
\begin{enumerate}[{\rm (1)}]
\item
The mapping $w\mapsto\CCC(w)$
gives a bijection between the set of Weyl vectors and the set of Conway chambers.
\item
Let $w$ be a Weyl vector.
A $(-2)$-vector $r$ of $\Lts$ defines a wall of the Conway chamber $\CCC(w)$ if and only if $r\in \LLL(w)$.
\qed
\end{enumerate}
\end{theorem}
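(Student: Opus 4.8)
The plan is to reconstruct Conway's argument, whose engine is Vinberg's algorithm~\cite{Vinberg1975} anchored at the cusp $w$, with Theorem~\ref{thm:coveringradius} as the decisive input. First I would use a Weyl vector $w$ to coordinatize $\Lts$. Since $\intfL{w,w}=0$ and $(\Z w)\sperp/\Z w\cong\Leech\spm$, I can pick $w\sprime\in\Lts$ with $\intfL{w,w\sprime}=1$ and split $\Lts\cong U\oplus\Leech\spm$ with $w=u_0$; writing a vector as $a u_0+b u_1+v$ with $v\in\Leech\spm$, one has $\intfL{w,\,\cdot\,}=b$. The $(-2)$-vectors $r$ with $\intfL{w,r}=1$ are then parametrized by $\Leech$, giving a natural bijection $\LLL(w)\leftrightarrow\Leech$, $r_\lambda\leftrightarrow\lambda$, under which a short computation yields $\intfL{r_\lambda,r_{\lambda\sprime}}=\tfrac12\,d(\lambda,\lambda\sprime)^2-2$; thus the mutual positions of the Leech roots are governed by distances in $\Leech$.

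Next I would pass to the hyperbolic space $\PPP_L/\R_{>0}$, in which $w$ is an ideal point. The horospheres centered at $w$ carry a Euclidean structure isometric to $\Leech\tensor\R$, and each wall $(r_\lambda)\sperp$ meets a horosphere in the perpendicular bisector between $0$ and $\lambda$; hence $\CCC(w)$ is the cone with apex $w$ over the Voronoi cell of $\Leech$ at the origin. A key point is that no root is orthogonal to $w$: such a root would descend to a root of $(\Z w)\sperp/\Z w\cong\Leech\spm$, and $\Leech$ has none. This makes $w$ an ideal controlling vector for Vinberg's algorithm: ordering $\RRR_L$ by the value $\intfL{w,r}\ge 1$, the first batch $\intfL{w,r}=1$ is exactly $\LLL(w)$.

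The hard part, and the only place where the metric geometry of $\Leech$ genuinely enters, is to show that the algorithm accepts nothing further: no root $r$ with $m:=\intfL{w,r}\ge 2$ defines a wall of $\CCC(w)$, equivalently $\intfL{x,r}\ge 0$ for every $x\in\CCC(w)$. Translating this inequality through the coordinates above converts it into a bound on how far the point $\tfrac1m\lambda\in\Leech\tensor\R$ can lie from the lattice $\Leech$, and the bound that makes it hold is precisely $d(\,\cdot\,,\Leech)\le\sqrt2$. This is exactly Theorem~\ref{thm:coveringradius}, and underlying it the classification of deep holes: the deep holes, the points at the extremal distance $\sqrt2$, are the farthest vertices of the Voronoi cell, and it is the equality ``covering radius $=\sqrt2$, and no larger'' that forbids any deeper wall from cutting into $\CCC(w)$. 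With the accepted roots pinned down as exactly $\LLL(w)$, Vinberg's algorithm shows that $\CCC(w)$ is a standard fundamental domain of $W(\Lts)$ whose wall-defining $(-2)$-vectors are precisely the Leech roots. This is statement~(2).

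Finally I would deduce~(1) formally. Statement~(2) says the image of $w\mapsto\CCC(w)$ consists of Conway chambers, and such $w$ exist because $\Lts\cong U\oplus\Leech\spm$ exhibits one. For surjectivity, recall that $W(\Lts)$ permutes the Conway chambers transitively; since $g(\CCC(w))=\CCC(gw)$ for $g\in W(\Lts)$, and $gw$ is again a Weyl vector ($g$ preserves the norm, primitivity, and the isometry class of $(\Z w)\sperp/\Z w$), every Conway chamber is some $\CCC(gw)$. For injectivity, note that by~(2) the set $\LLL(w)$ is intrinsic to the chamber $\CCC(w)$, being its set of wall-defining vectors; and since the $r_\lambda$ span $\Lts\tensor\Q$, the linear conditions $\intfL{w,r}=1$ for $r\in\LLL(w)$ pin down $w$ uniquely. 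Hence $\CCC(w)=\CCC(w\sprime)$ forces $\LLL(w)=\LLL(w\sprime)$ and then $w=w\sprime$, completing the bijection. The single serious obstacle is the covering-radius step; everything else is bookkeeping in Conway's coordinates together with standard facts about reflection groups acting on hyperbolic lattices.
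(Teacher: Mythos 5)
The paper offers no proof of this theorem --- it is quoted from Conway~\cite{Conway1983Note} with only the remark that it follows from Vinberg's algorithm and Theorem~\ref{thm:coveringradius} --- and your sketch is a faithful reconstruction of exactly that argument (Vinberg's algorithm anchored at the cusp $w$, the absence of roots orthogonal to $w$ because $\Leech$ has none, the Leech roots as the first accepted batch, and the computation showing that a root $r=(a,m,v)_{\Leech}$ with $m\ge 2$ satisfies $\intfL{r,r_\lambda}<0$ for some $\lambda$ precisely because $d(v/m,\Leech)\le\sqrt{2}$), so it is correct and takes the same route. One minor correction: only the upper bound ``covering radius $\le\sqrt{2}$'' is needed here, not the classification of deep holes, which enters the paper only later.
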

Let  $\Coinfty$ denote the group of affine isometries of the Leech lattice  $\Leech$.
We have $\Coinfty=\Leech\semidirectproduct \OG(\Leech)$,
where $\Leech$ acts on $\Leech$ by translation.
Let $U_{\Leech}$ be a copy of the hyperbolic plane $U$, and we put
\begin{equation*}\label{eq:LULeech}
L_{\Leech}:=U_{\Leech}\oplus \Leech\spm, 
\end{equation*}
which is isomorphic to $L_{26}$.
We write elements of $L_{\Leech}$ as 
\begin{equation*}\label{eq:abv}
(a, b, v)_{\Leech}:=a u_0 + b u_1 +v,
\quad\textrm{where $a, b\in \Z$ and $v\in \Leech\spm$},
\end{equation*}
where $u_0$ and $u_1$ are the basis of $U$ given in Example~\ref{example:U}.
Then the vector 
\[
w_{\Leech}:=(1,0,0)_{\Leech}
\]
is a Weyl vector of $L_{\Leech}$, and the mapping 
\begin{equation}\label{eq:Lbij}
\lambda\;\;\mapsto\;\; r_{\lambda}:=(\; -1-\lambda^2/2, \; 1, \; \lambda\;)_{\Leech},
\quad\textrm{where $\lambda^2=\intf{\lambda, \lambda}\spm_{\Leech}$, }
\end{equation}
gives a bijection $\Leech\spm\cong\LLL(w_{\Leech})$.
Then Theorem~\ref{thm:ConwayChamber}
implies the following:
\begin{corollary}\label{cor:Coinfty}
The automorphism group
\[
\OG(L_{\Leech}, w_{\Leech}):=\set{g\in \OG(L_{\Leech})}{w_{\Leech}^g=w_{\Leech}}
\]
of the Conway chamber $\CCC(w_{\Leech})$ is isomorphic to  $\Coinfty$ via  the bijection 
$r_{\lambda} \mapsto \lambda$.
\qed
\end{corollary}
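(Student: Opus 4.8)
The plan is to verify the isomorphism by a direct computation inside the explicit model $L_{\Leech}=U_{\Leech}\oplus\Leech\spm$, feeding off the bijection \eqref{eq:Lbij}. The guiding observation is that any $g\in\OG(L_{\Leech},w_{\Leech})$ fixes $w_{\Leech}$ and, being an isometry, preserves the set $\RRR_L$ of $(-2)$-vectors; hence it preserves $\LLL(w_{\Leech})=\set{r\in\RRR_L}{\intfL{w_{\Leech},r}=1}$ and therefore induces, via $r_\lambda\mapsto\lambda$, a permutation of $\Leech\spm$. The task is to identify this permutation as an affine isometry and to check that $g\mapsto(\text{this permutation})$ is a group isomorphism onto $\Coinfty=\Leech\semidirectproduct\OG(\Leech)$.

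First I would pin down the shape of an arbitrary $g\in\OG(L_{\Leech},w_{\Leech})$. Since $g$ fixes $w_{\Leech}=u_0$, it preserves $w_{\Leech}\sperp=\Z u_0\oplus\Leech\spm$ together with $\Z w_{\Leech}=\Z u_0$, hence induces on the quotient $w_{\Leech}\sperp/\Z w_{\Leech}\cong\Leech\spm$ an isometry $T\in\OG(\Leech\spm)=\OG(\Leech)$. Writing $g(u_1)=(a,b,v_1)_{\Leech}$, the relations $\intfL{g(u_1),u_0}=\intfL{u_1,u_0}=1$ and $\intfL{g(u_1),g(u_1)}=0$ force $b=1$ and $a=-v_1^2/2$, so $g(u_1)=(-v_1^2/2,\,1,\,v_1)_{\Leech}$ for some $v_1\in\Leech\spm$; likewise, for $v\in\Leech\spm$ the relations $\intfL{g(v),u_0}=0$ and $\intfL{g(v),g(u_1)}=\intfL{v,u_1}=0$ give $g(v)=(-\intf{T(v),v_1}\spm_{\Leech},\,0,\,T(v))_{\Leech}$. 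Conversely each pair $(T,v_1)\in\OG(\Leech)\times\Leech\spm$ defines such a $g$ (one checks the remaining inner products are preserved), and the displayed coefficients are integral because $\Leech$ is even; thus $\OG(L_{\Leech},w_{\Leech})$ is parametrized by $(T,v_1)$.

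With this description I would compute, for $\lambda\in\Leech\spm$,
\[
g(r_\lambda)=\Bigl(-1-\tfrac{\lambda^2}{2}-\tfrac{v_1^2}{2}-\intf{T(\lambda),v_1}\spm_{\Leech},\;1,\;T(\lambda)+v_1\Bigr)_{\Leech},
\]
and observe, using $\intf{T(\lambda),T(\lambda)}\spm_{\Leech}=\lambda^2$, that the first coordinate collapses to $-1-(T(\lambda)+v_1)^2/2$. Hence $g(r_\lambda)=r_{T(\lambda)+v_1}$, so the induced permutation of $\Leech\spm$ is the affine isometry $\lambda\mapsto T(\lambda)+v_1$. The assignment $g\mapsto(\lambda\mapsto T(\lambda)+v_1)$ is then a bijection onto $\Coinfty$ (it is the standard identification $(T,v_1)\mapsto(\lambda\mapsto T(\lambda)+v_1)$ composed with the parametrization of the previous paragraph), and it is a homomorphism: composing $g_1\leftrightarrow(T_1,v_1)$ with $g_2\leftrightarrow(T_2,v_2)$ gives linear part $T_1T_2$ and translation $T_1 v_2+v_1$, matching the semidirect-product law. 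Finally, Theorem~\ref{thm:ConwayChamber} legitimizes calling $\OG(L_{\Leech},w_{\Leech})$ the automorphism group of $\CCC(w_{\Leech})$: by part~(1) the correspondence $w\mapsto\CCC(w)$ is an $\OG(L_{\Leech})$-equivariant bijection, so the stabilizer of the vector $w_{\Leech}$ is exactly the stabilizer of the chamber $\CCC(w_{\Leech})$.

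The step I expect to demand the most care is the coordinate computation of $g(r_\lambda)$: the whole isomorphism hinges on the first coordinate reducing precisely to $-1-(T(\lambda)+v_1)^2/2$, which uses both the evenness of $\Leech$ (so that every coefficient above is an integer) and the fact that $T$ is an isometry of $\Leech\spm$. Once this normalization is confirmed, the remaining verifications are formal.
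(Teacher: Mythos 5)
Your proof is correct and follows the route the paper intends: the corollary is stated with no written proof, as an immediate consequence of Theorem~\ref{thm:ConwayChamber}, and your explicit parametrization of the stabilizer of $w_{\Leech}$ by pairs $(T,v_1)\in\OG(\Leech)\times\Leech\spm$ together with the verification that $g(r_\lambda)=r_{T(\lambda)+v_1}$ is exactly the standard computation the paper leaves implicit. I see no gaps.
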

A point $c\in \Leech\spm\tensor\R$ is called a \emph{deep hole}
if $c$ satisfies   $d(c, \Leech)= \sqrt{2}$.
The group $\Coinfty$ acts on the set of deep holes.
In~\cite{CPS1982Note}, deep holes are classified up to the action of $\Coinfty$.
For a deep hole $c\in \Leech\tensor\R$,
we put
\[
P_0(c):=\set{\lambda\in \Leech\spm}{d(c, \lambda)=\sqrt{2}},
\]
and call it the set of \emph{vertices} of $c$.
We then consider the set 
\begin{equation*}\label{eq:PLc}
\XXi_0 (c):=\set{r_\lambda\in \LLL(w_{\Leech}) }{\lambda\in P_0(c)}.
\end{equation*}
\begin{theorem}[Conway, Parker, Sloane~\cite{CPS1982Note}]
\begin{enumerate}[{\rm (1)}]
\item 
 For each deep hole $c$,  the set $\XXi_0 (c)$  forms 
an extended   $\ADE$-configuration,
and its type $\tau(c)$   
is one of the $23$ types $\tauN$ of Niemeier lattices  $N$ with roots.
\item Conversely, 
for an  $\ADE$-type $\tauN$ of a Niemeier lattice  $N$ with roots,
there exists a deep hole $c$ such that $\tau(c)=\tauN$.
\item 
 Two deep holes $c$ and $c\sprime$ are $\Coinfty$-equivalent if and only if
$\tau(c)=\tau(c\sprime)$.
\qed
\end{enumerate}
\end{theorem}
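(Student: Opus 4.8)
The plan is to translate the statement into the geometry of the even unimodular hyperbolic lattice $L_{\Leech}\cong L_{26}$ and its Conway chamber, through a dictionary between deep holes and the cusps of $\CCC(w_{\Leech})$. Given a deep hole $c$, set $c^2:=\intf{c,c}\spm_{\Leech}$ and put $v_c:=(-c^2/2,\,1,\,c)_{\Leech}\in L_{\Leech}\tensor\R$, an isotropic vector. A direct computation with the form of $L_{\Leech}=U_{\Leech}\oplus\Leech\spm$ gives, for $\lambda,\lambda\sprime\in\Leech\spm$ and the Leech roots $r_\lambda,r_{\lambda\sprime}$ of~\eqref{eq:Lbij},
\[
\intfL{v_c,r_\lambda}=\tfrac12\,d(c,\lambda)^2-1,
\qquad
\intfL{r_\lambda,r_{\lambda\sprime}}=\tfrac12\,d(\lambda,\lambda\sprime)^2-2 .
\]
By Theorem~\ref{thm:coveringradius} the covering radius is $\sqrt2$, so $d(c,\lambda)\ge\sqrt2$ for all $\lambda$; hence $\intfL{v_c,r_\lambda}\ge0$ for every Leech root, with equality exactly when $\lambda\in P_0(c)$. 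Since the Leech roots are precisely the walls of $\CCC(w_{\Leech})$ (Theorem~\ref{thm:ConwayChamber}), and since deep holes are rational, being circumcenters of Delaunay cells, the ray $\R_{\ge0}v_c$ is a rational isotropic ray in the closed Conway chamber; letting $f_c$ denote its primitive integral generator, $\XXi_0(c)$ is exactly the set of walls of $\CCC(w_{\Leech})$ through the cusp $f_c$. As $\intfL{v_c,w_{\Leech}}=1$, this cusp is never $\R_{\ge0}w_{\Leech}$, and $c\mapsto f_c$ is a bijection from deep holes onto the cusps of $\CCC(w_{\Leech})$ other than $\R_{\ge0}w_{\Leech}$, equivariant for $\Coinfty=\OG(L_{\Leech},w_{\Leech})$ (Corollary~\ref{cor:Coinfty}).

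For (1), since $f_c$ is primitive isotropic in the even unimodular lattice $L_{\Leech}$ of signature $(1,25)$, the quotient $N_c:=f_c\sperp/\Z f_c$ is an even unimodular negative-definite lattice of rank $24$, hence $N_c=N\spm$ for a Niemeier lattice $N$. Each $r\in\XXi_0(c)$ is a $(-2)$-vector orthogonal to $f_c$, so its image in $N_c$ is a root of $N\spm$. I would then analyze the cusp locally: on $f_c\sperp/\R f_c\cong N_c\tensor\R$ the stabilizer of the ray $\R_{\ge0}f_c$ in $W(L_{\Leech})$ induces the affine Weyl group of the root system of $N\spm$, so that the walls of $\CCC(w_{\Leech})$ through $f_c$ become the walls of a fundamental alcove, i.e.\ the affine simple roots. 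These form the disjoint union of the extended Dynkin diagrams of the connected components of $\tauN$, that is, an extended $\ADE$-configuration of type $\tau(c)=\tauN$. Since $P_0(c)\ne\emptyset$ the lattice $N\spm$ has roots, so $N\not\cong\Leech$ and $\tau(c)$ is one of the $23$ types.

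For (2), I would run the dictionary backwards. For any Niemeier lattice $N$ with roots, $U\oplus N\spm$ is even unimodular of signature $(1,25)$, hence isomorphic to $L_{\Leech}$, and under such an isomorphism the isotropic vector $u_0$ satisfies $u_0\sperp/\Z u_0\cong N\spm$. Since $\CCC(w_{\Leech})$ is a fundamental domain for $W(L_{\Leech})$, I would carry this ray by an element of $W(L_{\Leech})$ to a cusp $f$ of $\CCC(w_{\Leech})$ with $f\sperp/\Z f\cong N\spm$; as $N\spm$ has roots we have $f\ne\R_{\ge0}w_{\Leech}$, so $f=f_{c}$ for a deep hole $c$, and then $\tau(c)=\tauN$ by part~(1). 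For (3), invariance of the type is immediate from the equivariant bijection, since $\Coinfty$-equivalent deep holes yield $\OG(L_{\Leech},w_{\Leech})$-equivalent cusps and hence isomorphic $N_c$. For the converse, suppose $\tau(c)=\tau(c\sprime)$, so that $N_c\cong N_{c\sprime}$. A primitive isotropic vector of a unimodular lattice is determined up to $\OG(L_{\Leech})$ by the isomorphism class of $f\sperp/\Z f$ (an Eichler-type transitivity, available because $L_{\Leech}$ splits off a hyperbolic plane), so there is $g\in\OG(L_{\Leech})$ with $g(f_c)=f_{c\sprime}$; as both vectors lie in $\bdr\barPPP_L$, this $g$ preserves $\PPP_L$. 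Writing the stabilizer of $\PPP_L$ as $W(L_{\Leech})\semidirectproduct\OG(L_{\Leech},w_{\Leech})$ and decomposing $g=w\cdot a$ accordingly, the cusps $a(f_c)$ and $w(a(f_c))=f_{c\sprime}$ both lie in $\CCC(w_{\Leech})$ and are $W(L_{\Leech})$-equivalent, hence equal by the fundamental-domain property; thus $f_{c\sprime}=a(f_c)$ with $a\in\Coinfty$, and $c,c\sprime$ are $\Coinfty$-equivalent.

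The step I expect to be the main obstacle is the local analysis at the cusp in part~(1): showing that the walls of $\CCC(w_{\Leech})$ incident to $f_c$ are exactly the affine simple roots — equivalently, that the vertex set $P_0(c)$ forms an extended Dynkin diagram of type $\tauN$ — is the substantive content of~\cite{CPS1982Note}, not a formal consequence of the chamber description. A secondary difficulty is to justify the boundary-at-infinity facts used above: that $\CCC(w_{\Leech})$ meets every $W(L_{\Leech})$-orbit of rational isotropic rays in a single ray (used in (2) and (3)) and the Eichler transitivity for primitive isotropic vectors.
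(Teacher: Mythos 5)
The first thing to say is that the paper does not prove this theorem: it is quoted from~\cite{CPS1982Note} and stated without proof, and the surrounding material (the identity~\eqref{eq:ellrlambda}, Proposition~\ref{prop:ray}, Proposition~\ref{prop:primitive}) takes the classification as an input. So there is no proof of the paper's to compare yours with. What you have sketched is, in outline, Borcherds' derivation of the Conway--Parker--Sloane classification from the covering radius theorem, and your dictionary between deep holes and the cusps of $\CCC(w_{\Leech})$ other than $\R_{\ge 0}w_{\Leech}$ is exactly the paper's Proposition~\ref{prop:ray}. The outline is sound, and you are in fact too pessimistic about your ``main obstacle'': once $f_c$ is known to be a primitive isotropic vector of $L_{\Leech}$, the lattice $N_c=f_c\sperp/\Z f_c$ is a Niemeier lattice, it has roots because $P_0(c)\ne\emptyset$, so by Niemeier's classification its root system has rank $24$; the reflections in roots of $L_{\Leech}$ orthogonal to $f_c$ act on a horosphere at the cusp as the affine Weyl group of a rank-$24$ root system, whose fundamental alcove is compact with walls indexed by the disjoint union of the extended Dynkin diagrams; and since the root hyperplanes not containing the ideal point $\R_{\ge 0}f_c$ have bounded height at that cusp, a small horoball meets only the hyperplanes through $f_c$, so $\CCC(w_{\Leech})$ cuts the horosphere in exactly one alcove and $\XXi_0(c)$ is its set of inward wall normals. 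That is a formal (if fussy) consequence of Theorem~\ref{thm:ConwayChamber} plus Niemeier's classification; the hard analytic input is Theorem~\ref{thm:coveringradius}, which you correctly treat as a black box.

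Three concrete gaps remain. (i) \emph{Circularity:} in~\cite{CPS1982Note} the covering radius is deduced \emph{from} the deep-hole classification, so if Theorem~\ref{thm:coveringradius} is only available through that reference, your argument is circular as a proof of the present theorem; you must invoke an independent proof of the covering radius (Borcherds--Conway--Queen--Sloane). (ii) \emph{Rationality and primitivity of $f_c$:} your appeal to ``circumcenters of Delaunay cells'' presupposes that $P_0(c)$ affinely spans $\Leech\tensor\R$, which is essentially the full-rank statement you are in the middle of proving. It can be repaired directly from Theorem~\ref{thm:coveringradius}: if the vectors $\lambda-c$ ($\lambda\in P_0(c)$) did not span, then for $v\ne 0$ orthogonal to all of them one would have $d(c+tv,\Leech)>\sqrt{2}$ for small $t\ne 0$, a contradiction; rationality of $c$ follows, and one should then work with the primitive generator of $\R_{\ge 0}\bar f(c)\cap L_{\Leech}$ rather than asserting that $h\,c$ is it (that is the content of Proposition~\ref{prop:primitive}, which the argument does not need in this sharp form). (iii) \emph{The transitivity statement in (3):} Eichler's criterion requires an orthogonal summand $U\oplus U$ and is unavailable in signature $(1,25)$; moreover the statement ``primitive isotropic vectors of a unimodular lattice form one orbit'' is false here (the orbits correspond to the $24$ Niemeier lattices). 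The correct and elementary substitute: for $f$ primitive isotropic in the unimodular $L_{\Leech}$ there is $z$ with $\intf{f,z}_L=1$, so $f$ lies in a hyperbolic-plane summand and $(L_{\Leech},f)\cong(U\oplus(f\sperp/\Z f),u_0)$; hence $f$ and $f\sprime$ are $\OG(L_{\Leech})$-equivalent if and only if $f\sperp/\Z f\cong f\sprime{}\sperp/\Z f\sprime$. With that replacement, and with the exactness of the closed fundamental chamber at ideal points (provable by the standard length-induction for Coxeter groups, which you should cite or prove rather than merely flag), your arguments for (2) and (3) go through.
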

\begin{definition}
The \emph{Coxeter number of a deep hole $c$} is the Coxeter number of 
the Niemeier lattice  $N$ with roots such that $\tauN=\tau(c)$.
\end{definition}
%
%
%
\section{The shape of a Conway chamber}\label{sec:shape}
For each $\Coinfty$-equivalence class of deep holes,
we have computed a representative element $c$ and its vertices $P_0(c)$
explicitly in~\cite{Shimada2017}.
The results are also presented  on the web page~\cite{compdataLN}.
Using this data, we can confirm the following:
\begin{proposition}\label{prop:primitive}
Let  $c\in \Leech\spm\tensor\Q$ be a deep hole, and 
$h$ the Coxeter number of $c$.
Then  $hc$ is a primitive vector of $\Leech\spm$, and $h\intf{c, c}_{\Leech}\spm /2$ is an integer.
\qed
\end{proposition}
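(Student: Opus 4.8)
The plan is to prove both assertions directly from the explicit classification of deep holes, exactly as the statement of the proposition invites (``Using this data, we can confirm the following''). Since there are exactly $23$ types $\tauN$, and by part (3) of the Conway--Parker--Sloane theorem two deep holes are $\Coinfty$-equivalent precisely when their types coincide, it suffices to verify the two claims for a single chosen representative $c$ in each of the $23$ equivalence classes. These representatives, together with their vertex sets $P_0(c)$, have been computed in \cite{Shimada2017} and are tabulated in \cite{compdataLN}; so the proof reduces to a finite, case-by-case check.

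First I would recall why the two quantities in question admit a clean intrinsic description, so that the verification is not merely a black-box computation. For a deep hole $c$, the vertex set $P_0(c)=\set{\lambda\in\Leech\spm}{d(c,\lambda)=\sqrt 2}$ maps, via the bijection \eqref{eq:Lbij}, to an extended $\ADE$-configuration $\XXi_0(c)\subset\LLL(w_{\Leech})$ whose type is $\tau(c)=\tauN$ for the corresponding Niemeier lattice $N$. The key link is that $c$ is the center of the simplex spanned by its vertices; concretely, writing each vertex as $r_\lambda=(-1-\lambda^2/2,\,1,\,\lambda)_{\Leech}$, the barycentric relations among the vertices of an extended $\ADE$-diagram are governed by the coefficient function $m$ of Section~\ref{subsec:m}. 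The integer $h-1=\sum_{r}m(r)$ appearing in Definition~\ref{def:Coxeter number}\eqref{h:c} is precisely the sum of these coefficients over one connected component, and it is this arithmetic that forces $h$ to be the natural scaling factor clearing the denominators in $c$.

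Next I would carry out the two verifications. For primitivity of $hc$: I would express $hc$ in coordinates using the data for $P_0(c)$, observe that its entries are integers (so $hc\in\Leech\spm$), and then check that no proper divisor $d\mid h$ gives $(h/d)c\in\Leech\spm$ — equivalently, that $\gcd$ of the coordinates of $hc$ is $1$. For the integrality of $h\intf{c,c}\spm_{\Leech}/2$: I would compute $\intf{c,c}\spm_{\Leech}$ from the representative and confirm that multiplying by $h/2$ yields an integer in all $23$ cases. Both checks are finite and mechanical once the coordinates of $c$ are in hand, and they are recorded in \cite{compdataLN}.

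The main obstacle is conceptual rather than computational: to present the result as genuinely proved (not merely ``confirmed by machine''), one wants a uniform argument explaining why the clearing factor is exactly the Coxeter number $h$ and not some unrelated integer. The cleanest route is to note that $c$ lies in the affine hull of $P_0(c)$ and is its unique deep-hole center, so the denominators of the barycentric coordinates of $c$ are controlled by the marks $m(r)$; summing these over a component gives $h-1$, and the remark that $h_i$ is independent of $i$ (Section~\ref{subsec:m}) is what makes a single scaling factor $h$ work simultaneously across all components of $\tau(c)$. I would therefore frame the proof as: the finite verification establishes the statement, while the structural observation above (together with \eqref{eq:rhonorm} controlling $\intf{c,c}\spm_{\Leech}$) explains why $h$ is the correct and minimal such factor.
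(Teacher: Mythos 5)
Your overall strategy---reduce to one representative per $\Coinfty$-class via part (3) of the Conway--Parker--Sloane theorem and then verify the two assertions by a finite check against the tabulated data of \cite{Shimada2017} and \cite{compdataLN}---is exactly what the paper does; the paper offers nothing beyond ``using this data, we can confirm the following.'' Your gloss on why $h$ is the clearing factor is also sound as far as it goes: comparing the $u_1$- and $\Leech\spm$-components in Proposition~\ref{prop:msum} gives $\sum_{r_\lambda\in \XXi_0(c)_i} m(r_\lambda)=h$ and $\sum_{r_\lambda\in \XXi_0(c)_i} m(r_\lambda)\,\lambda=hc$, which already shows $hc\in\Leech\spm$ without consulting the tables.

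There is, however, a genuine gap in the reduction step as you state it. Checking one representative per class only suffices if the properties are invariant under all of $\Coinfty=\Leech\semidirectproduct\OG(\Leech)$, and the translation part destroys primitivity of $hc$. Concretely, for the hole of type $A_1^{24}$ one has $h=2$ and, after translating one vertex to the origin, $c=\lambda/2$ with $\intf{\lambda,\lambda}_{\Leech}=8$; here $hc=\lambda$ is primitive because $\Leech$ has no vectors of square-norm $2$. But $c+\lambda=\tfrac{3}{2}\lambda$ is a deep hole in the same $\Coinfty$-class, and $h(c+\lambda)=3\lambda$ is not primitive. So the verification on $23$ representatives does not establish the primitivity claim for an arbitrary deep hole (indeed, taken literally for all deep holes, that claim is false). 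What \emph{is} invariant, and what your finite check does establish, is: (i) $hc\in\Leech\spm$, equivalently the order of $c$ in $(\Leech\spm\tensor\Q)/\Leech\spm$ divides $h$ (and, after checking representatives, equals $h$); (ii) granted (i), the integrality of $h\intf{c,c}_{\Leech}\spm/2$, since under $c\mapsto c+\mu$ this quantity changes by $\intf{hc,\mu}_{\Leech}\spm+h\intf{\mu,\mu}_{\Leech}\spm/2\in\Z$; and (iii) the primitivity of $f(c)$ in $L_{\Leech}$, because $\Coinfty\cong\OG(L_{\Leech},w_{\Leech})$ acts by isometries of $L_{\Leech}$ carrying $f(c)$ to $f(c^g)$. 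Item (iii) is the only consequence of the primitivity assertion used later (in Proposition~\ref{prop:ray}), and it follows from the weaker invariant statement that $c$ has order exactly $h$ modulo $\Leech\spm$. You should therefore either restrict the primitivity of $hc$ to the chosen representatives or replace it by (iii); note also that your barycentric/marks argument controls divisibility of $hc$ only by divisors of $h$, and says nothing about primes $p\nmid h$, which is precisely where the $\Coinfty$-invariance breaks down.
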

Recall that $L_{\Leech}=U_{\Leech}\oplus \Leech\spm$.
We consider the Conway chamber $\CCC(w_{\Leech})$
of $L_{\Leech}$ corresponding to the Weyl vector $w_{\Leech}=(1,0,0)_{\Leech}$.
Let $\overline{\CCC}(w_{\Leech})$ be the closure of  $\CCC(w_{\Leech})$
in $\barPPP_L$.
Let $c\in \Leech\spm\tensor\Q$ be a deep hole with the Coxeter number $h$.
We put 
\begin{equation}\label{eq:definitionfc}
\bar{f}(c):=(\; -\intf{c,c}\spm_{\Leech}/2, \; 1, \;c\;)_{\Leech}\;\in\;L_{\Leech}\tensor \Q,
\quad
f(c):= h \bar{f}(c).
\end{equation}
By Proposition~\ref{prop:primitive}, 
 the vector $f(c)$ is a primitive vector of $L_{\Leech}$ with $\intf{f(c), f(c)}_L=0$.
Then we have the following.
\begin{proposition}\label{prop:ray}
The intersection  $\barCCC(w_{\Leech})\cap \bdr \barPPP_L$ is a union of the half-lines 
$\R_{\ge 0} w_{\Leech}$ and $\R_{\ge 0} f(c)$,
where $c$ runs through the set of deep holes.
\end{proposition}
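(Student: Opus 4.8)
The plan is to identify the extreme rays of the cone $\barCCC(w_{\Leech})$ that lie on the light-cone boundary $\bdr\barPPP_L$, and to show these correspond precisely to the Weyl vector $w_{\Leech}$ together with the isotropic vectors $f(c)$ coming from deep holes. First I would recall that by Theorem~\ref{thm:ConwayChamber}(2), the walls of the Conway chamber $\CCC(w_{\Leech})$ are defined exactly by the Leech roots $r\in\LLL(w_{\Leech})$, which via the bijection~\eqref{eq:Lbij} correspond to vectors $r_\lambda=(-1-\lambda^2/2,\,1,\,\lambda)_{\Leech}$ for $\lambda\in\Leech\spm$. A point $x=(a,b,v)_{\Leech}$ lies in $\barCCC(w_{\Leech})$ precisely when $\intfL{x,r_\lambda}\ge 0$ for all $\lambda$, together with $x\in\barPPP_L$. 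Computing $\intfL{x,r_\lambda}=a - b(1+\lambda^2/2)+\intf{v,\lambda}\spm_{\Leech}$, I would rewrite the wall conditions in terms of the geometry of $\Leech\spm$.

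The key step is to analyze which isotropic vectors $x\in\bdr\barPPP_L$ survive all wall inequalities. For an isotropic point $x=(a,b,v)_{\Leech}$ with $b>0$, after normalizing $b=1$ we have $a=-\intf{v,v}\spm_{\Leech}/2$ from isotropy, so $x=\bar f(v)$ with $v\in\Leech\spm\tensor\R$. The wall inequalities $\intfL{x,r_\lambda}\ge 0$ then become $\tfrac12\intf{v-\lambda,v-\lambda}\spm_{\Leech}\le 0$, i.e. $d(v,\lambda)^2\ge 0$ reversed by the sign convention — more carefully, working with the negative-definite form, the condition reduces to $d(v,\lambda)^2\le 2$ for all lattice points $\lambda$, which says exactly that $v$ is a point of covering radius at most $\sqrt 2$. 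By Theorem~\ref{thm:coveringradius} the covering radius is $\sqrt 2$, so every $v$ satisfies this with equality attained at the nearest lattice points; the only points lying on the boundary ray of the chamber (rather than its interior) are those where equality $d(v,\lambda)=\sqrt2$ holds for enough $\lambda$ to pin the ray down, which is precisely the deep-hole condition $d(v,\Leech)=\sqrt 2$. Thus the isotropic boundary rays with $b>0$ are exactly $\R_{\ge0}f(c)$ for deep holes $c$, and the ray with $b=0$ is $\R_{\ge0}w_{\Leech}$.

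To make this rigorous I would argue in two directions. For inclusion, given a deep hole $c$, I must check $f(c)\in\barCCC(w_{\Leech})$: isotropy follows from Proposition~\ref{prop:primitive} and the definition~\eqref{eq:definitionfc}, while the wall inequalities follow from $d(c,\lambda)\ge\sqrt 2$ being equivalent (after the sign flip) to $\intfL{\bar f(c),r_\lambda}\ge 0$. For the reverse inclusion I would take an arbitrary isotropic $x\in\barCCC(w_{\Leech})\cap\bdr\barPPP_L$, normalize using $\intfL{x,w_{\Leech}}=b\ge 0$ (which holds because $w_{\Leech}$ itself is in the closure and $x$ lies in a fundamental domain adjacent to it), split into the cases $b=0$ and $b>0$, and in the latter case recover $v$ as a point attaining covering radius $\sqrt 2$, hence a deep hole. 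The case $b=0$ should force $x$ proportional to $w_{\Leech}$ since an isotropic vector orthogonal-or-adjacent to $w_{\Leech}$ inside the positive cone with $b=0$ must be a multiple of $u_0$.

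The main obstacle I expect is the reverse inclusion, specifically showing that \emph{every} isotropic ray of $\barCCC(w_{\Leech})$ meeting the light cone is accounted for — one must rule out isotropic boundary points $v$ that satisfy all inequalities $d(v,\lambda)\le\sqrt2$ strictly (covering radius $<\sqrt 2$ attained nowhere), which would lie in the interior of the chamber rather than on a boundary ray, and confirm that genuine boundary rays force $v$ to be a \emph{deep} hole with $d(v,\Leech)=\sqrt2$ exactly. This hinges on interpreting the chamber's extreme rays as the isotropic vectors fixed by the maximal parabolic subgroups, and identifying those parabolic fixed loci with deep holes via the full strength of Conway's and Conway--Parker--Sloane's classifications (Theorem~\ref{thm:ConwayChamber} and the deep-hole theorem). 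Tying the combinatorial wall structure of $\CCC(w_{\Leech})$ to the geometric covering-radius condition is where the real content lies; once that dictionary is set up, the two inclusions follow from Proposition~\ref{prop:primitive} and Theorem~\ref{thm:coveringradius}.
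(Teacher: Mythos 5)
Your overall strategy is the paper's: normalize an isotropic boundary point $\ell$ with $\intfL{\ell, w_{\Leech}}>0$ to the form $\bar f(v)=(-\intf{v,v}\spm_{\Leech}/2,\,1,\,v)_{\Leech}$, compute $\intfL{\bar f(v), r_\lambda}=-1+d(v,\lambda)^2/2$, and combine the wall inequalities with Theorem~\ref{thm:coveringradius}. But the central paragraph of your argument gets the key inequality backwards, and this error propagates. The condition $\intfL{\bar f(v),r_\lambda}\ge 0$ for all $\lambda$ is $d(v,\lambda)^2\ge 2$ for all $\lambda\in\Leech\spm$, i.e.\ $d(v,\Leech)\ge\sqrt{2}$ --- not, as you write, ``$d(v,\lambda)^2\le 2$ for all lattice points $\lambda$.'' The latter is impossible for any $v$ (distances to lattice points in a definite lattice are unbounded), and it is also not what the covering radius theorem says: that theorem is the existential bound $d(v,\Leech)\le\sqrt{2}$ for every $v$, i.e.\ \emph{some} $\lambda$ is within $\sqrt{2}$, not all of them. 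Once the inequality is oriented correctly, the proof is a one-line squeeze: the chamber condition gives $d(v,\Leech)\ge\sqrt{2}$, the covering radius gives $d(v,\Leech)\le\sqrt{2}$, hence $d(v,\Leech)=\sqrt{2}$ and $v$ is a deep hole $c$ with $\ell=\bar f(c)$; conversely every deep hole satisfies the chamber inequalities. You do state this correct equivalence in your ``inclusion'' step, so your write-up is internally inconsistent.

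The practical consequence of the sign error is that the ``main obstacle'' you describe in the last paragraph does not exist. There is nothing to rule out about ``isotropic boundary points satisfying all inequalities strictly'' or about points ``in the interior of the chamber'': a point of $\bdr\barPPP_L$ is never in the interior of $\CCC(w_{\Leech})$, and the squeeze argument above already characterizes exactly which boundary points lie in $\barCCC(w_{\Leech})$. In particular you do not need the Conway--Parker--Sloane \emph{classification} of deep holes, maximal parabolic subgroups, or any identification of extreme rays with parabolic fixed loci --- only the definition of a deep hole and the covering radius theorem. The remaining pieces of your outline (the $b=0$ case forcing $\ell\in\R_{\ge 0}w_{\Leech}$, via the fact that two isotropic vectors in the same closed positive cone with zero pairing are proportional) are fine and match the paper's treatment.
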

\begin{proof}
It is obvious that $\R_{\ge 0} w_{\Leech}\subset \barCCC(w_{\Leech})\cap \bdr \barPPP_L$.
Let $\ell$ be a  point of $ \bdr \barPPP_L \;\setminus \; \R_{\ge 0} w_{\Leech}$.
Then we have $\intf{\ell, w_{\Leech}}_L>0$.
Rescaling $\ell$ by a positive real number,
we assume that $\intf{\ell, w_{\Leech}}_L=1$
so that
we have 
\[
\ell=(\; -\intf{v,v}\spm_{\Leech}/2, \; 1, \;v\;)_{\Leech}
\]
for some $v\in \Leech \tensor\R$.
Then, for each $\lambda\in \Leech\spm$,  we have 
\begin{equation}\label{eq:ellrlambda}
\intf{\ell, r_{\lambda}}_L=
-1-\frac{\intf{\lambda, \lambda}_{\Leech}\spm}{2}-\frac{\intf{v, v}_{\Leech}\spm}{2}+\intf{v, \lambda}_{\Leech}\spm=
-1+\frac{d(v, \lambda)^2}{2}.
\end{equation}
Therefore $\ell$ belongs to $\barCCC (w_{\Leech})$ if and only if $v$ is a deep hole $c$, and 
in this case, we have 
 $\ell=\bar{f}(c)$.
\end{proof}
Let $c\in \Leech\spm\tensor\Q$ be a deep hole.
We have $\intf{f(c), r_{\lambda}}_L\in \Z_{\ge 0}$
for any $\lambda\in \Leech\spm$.
For $\nu \in \Z_{\ge 0}$, we put
\[
\XXi_{\nu} (c):=
\set{r_{\lambda}\in \LLL(w_{\Leech})}{\intfL{r_{\lambda}, f(c)}=\nu}.
\]
By~\eqref{eq:ellrlambda}, 
we see that $\XXi_{\nu} (c)$ is in one-to-one correspondence with the set
\[
P_{\nu}(c):=\set{\lambda\in \Leech\spm}{d(c, \lambda)^2=2(1+\nu/h)}
\]
by the bijection $\lambda\mapsto r_{\lambda}$ between $\Leech\spm$ and $\LLL(w_{\Leech})$.
Note that these definitions are compatible with the definitions of $\Xi_0(c)$ and $P_0(c)$ in 
Section~\ref{sec:Deepholes}.
The set $P_0(c)$ is the set of points of $\Leech$ nearest to $c$,
and $P_1(c)$ is the set of points of  $\Leech$ next nearest to $c$.
\begin{remark}\label{rem:affinesubspace}
The intersection  form $\intf{\phantom{a}, \phantom{a}}_L$ of $L\tensor\R$ restricted to the affine subspace 
of $L_{\Leech}\tensor\R$ defined by $\intfL{w_{\Leech}, x}=1$ and $\intfL{f(c), x}=\nu$
is an inhomogeneous quadratic form whose homogeneous part of degree $2$ is negative-definite.
Hence we can  explicitly calculate the set $\XXi_{\nu}(c)$.
Then we  obtain  the set $P_{\nu}(c)$.
\end{remark}
We investigate the sets $\XXi_{0}$ and $\XXi_{1}$.
Propositions~\ref{prop:msum},~\ref{prop:thetais}, and~\ref{prop:XXi1M} below
were observed in~\cite{CS1982Note}.
We can also confirm them by looking at the computational data in~\cite{compdataLN}.
As will be explained in Section~\ref{subsec:virtual},
they have geometric meanings in terms of the virtual $K3$ surface $\XX$.
\par
Recall that $\XXi_0 (c)$ forms an extended $\ADE$-configuration of type $\tau(c)$.
We write $\tau(c)$ as 
 \[
  \tau(c)=\tau(c)_1+\cdots+\tau(c)_K,
  \]
  where $\tau(c)_i$ are the $\ADE$-types of 
  the connected components 
  of the dual graph of $\XXi_0 (c)$.
 Let
 \[
\XXi_0 (c)=\XXi_0 (c)_1\sqcup\dots\sqcup \XXi_0 (c)_K
 \]
 be the corresponding decomposition. 
Then we have  a function $ m\colon  \XXi_0 (c)_i\to \Z_{>0}$
defined in Section~\ref{subsec:m} for $i=1, \dots, K$.
\begin{proposition}\label{prop:msum}
 We have
 $\displaystyle{\sum_{r\in \XXi_0 (c)_i} \; m(r) r=f(c)}$
for $i=1, \dots, K$.
 \qed
\end{proposition}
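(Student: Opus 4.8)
The plan is to show that the vector $g_i:=\sum_{r\in \XXi_0(c)_i} m(r)\,r$ coincides with $f(c)$ by exploiting the characterization of $m$ given in Section~\ref{subsec:m} as the generator of the kernel of the Gram pairing on an extended $\ADE$-diagram. First I would recall that, since $\XXi_0(c)_i$ forms a connected extended $\ADE$-configuration of type $\tau(c)_i$, the defining property of $m$ from Section~\ref{subsec:m} tells us that $\sum_{r\in \XXi_0(c)_i} m(r)\,\intf{r,r\sprime}_L=0$ for every $r\sprime\in \XXi_0(c)_i$; equivalently, $\intf{g_i, r\sprime}_L=0$ for all such $r\sprime$. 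The goal is to identify this isotropic, "orthogonal to its own diagram" vector with $f(c)$, which is the primitive isotropic vector on the ray $\R_{\ge 0}\bar f(c)$ cut out by the deep hole $c$ via Proposition~\ref{prop:ray}.

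The main bridge is that every $r\in \XXi_0(c)\subset \LLL(w_{\Leech})$ satisfies $\intf{f(c), r}_L=0$ by the very definition of $\XXi_0(c)$ (the $\nu=0$ case), so $f(c)$ is itself orthogonal to all the $r$ in the configuration. Thus both $g_i$ and $f(c)$ lie in the orthogonal complement, inside $L_{\Leech}\tensor\Q$, of the span of $\XXi_0(c)_i$. The plan is to argue that, restricted to the rational span of $\XXi_0(c)_i$ together with $f(c)$, the radical of the intersection form is one-dimensional and spanned by $f(c)$: indeed the extended $\ADE$-diagram has a one-dimensional radical generated by $m$, and adjoining the isotropic vector $f(c)$ (which pairs to zero with every diagram vector) does not change the radical because $f(c)$ already lies in the span of the diagram vectors. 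This last inclusion is the crucial point: since $\XXi_0(c)_i$ is an extended diagram, its $n_i+1$ vectors span a rank-$n_i$ subspace whose radical is $\Q g_i$, and one checks that $f(c)$ lies in this span, forcing $f(c)\in \Q g_i$.

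Granting $f(c)=\kappa\, g_i$ for some $\kappa\in \Q$, I would pin down $\kappa=1$ by pairing against $w_{\Leech}$. Every Leech root $r=r_\lambda$ satisfies $\intf{w_{\Leech}, r_\lambda}_L=1$ by the definition of $\LLL(w_{\Leech})$, so $\intf{w_{\Leech}, g_i}_L=\sum_{r\in \XXi_0(c)_i} m(r)$. On the other hand, by Definition~\ref{def:Coxeter number}(c) the sum of the coefficients $m(r)$ over a connected extended $\ADE$-diagram equals $h$ (it is $1+(h-1)$, the $\theta$-coefficient plus the sum of the highest-root coefficients), and $\intf{w_{\Leech}, f(c)}_L=h\,\intf{w_{\Leech}, \bar f(c)}_L=h\cdot 1=h$ from~\eqref{eq:definitionfc} and the fact that the $u_0$-coefficient of $\bar f(c)$ pairs with $w_{\Leech}=u_0$ to give the $u_1$-coefficient, which is $1$. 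Matching these two computations gives $\kappa=1$, hence $g_i=f(c)$.

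The step I expect to be the main obstacle is verifying that $f(c)$ genuinely lies in the rational span of $\XXi_0(c)_i$, rather than merely being orthogonal to it; orthogonality alone is weaker than membership, and it is conceivable a priori that $f(c)$ sits outside the span while still annihilating each diagram vector. The cleanest route is probably to use the geometric input from Proposition~\ref{prop:ray} and the deep-hole structure: the vertices $P_0(c)$ and the associated roots determine $c$ (and hence $\bar f(c)$) as an affine combination, so that $\bar f(c)$, and therefore $f(c)$, is expressible through the vectors $r_\lambda$ with $\lambda\in P_0(c)$, i.e. exactly the elements of $\XXi_0(c)$. Pinning down this spanning statement rigorously—rather than deferring to the explicit computational data in~\cite{compdataLN}—is where the real work lies, and it is what ties the combinatorics of the extended $\ADE$-diagram to the geometry of the deep hole.
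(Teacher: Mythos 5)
Your overall strategy is sound, and it is worth noting that the paper itself does not prove Proposition~\ref{prop:msum}: it records it as an observation from~\cite{CS1982Note}, verifiable from the computational data in~\cite{compdataLN}, and then explains it only heuristically via the virtual $K3$ surface (a fiber class is the sum of its components with multiplicities). So an honest deduction from the definitions, which is what you attempt, is genuinely different from anything in the paper. Your endgame is also correct: $\intf{w_{\Leech}, g_i}_L=\sum m(r)=h$ (the highest-root coefficients sum to $h-1$ and the extra node contributes $1$), while $\intf{w_{\Leech}, f(c)}_L=h\,\intf{w_{\Leech},\bar{f}(c)}_L=h$, so once proportionality of $g_i$ and $f(c)$ is known the constant of proportionality is $1$.

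The gap you flag --- that $f(c)$ might be orthogonal to the span of $\XXi_0(c)_i$ without lying in it --- is real as you have set things up, but it is a detour: you do not need the span membership at all. The observation that closes the argument is that $g_i:=\sum_{r}m(r)r$ and $f(c)$ are two nonzero isotropic vectors of the Lorentzian space $L_{\Leech}\tensor\Q$ of signature $(1,25)$ that are orthogonal to each other, and in such a space a totally isotropic subspace has dimension at most $1$, so they must be proportional. Indeed $\intf{g_i,g_i}_L=\sum_{r\sprime}m(r\sprime)\intf{g_i,r\sprime}_L=0$ because $m$ generates the radical of the Gram matrix of the extended configuration (the second characterization in Section~\ref{subsec:m}), and $\intf{f(c),g_i}_L=\sum_{r}m(r)\intf{f(c),r}_L=0$ because every $r\in\XXi_0(c)$ pairs to $0$ with $f(c)$ by the definition of $\XXi_0(c)$; finally $g_i\ne 0$ since $\intf{w_{\Leech},g_i}_L=h>0$. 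With this lemma in place your proof is complete without any appeal to the deep-hole geometry or to the computational data. (A minor slip elsewhere: the $n_i+1$ vectors of $\XXi_0(c)_i$ span an $(n_i+1)$-dimensional subspace on which the form has rank $n_i$; they are in fact linearly independent, since any linear relation among them would have coefficient vector proportional to $m$ and would force $g_i=0$, contradicting $\intf{w_{\Leech},g_i}_L=h$.)
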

Next, we investigate the set $\XXi_1(c)$.
\begin{proposition}\label{prop:thetais}
Let $s$ be an element of $\XXi_1(c)$.
Then, for each $i=1, \dots, K$,  
there exists a unique element $\theta(i, s)$ of $\XXi_0(c)_i$
such that,  for all $r\in \XXi_0(c)_i$, we have 
\[
\intf{r, s}_L=\begin{cases}
1  & \textrm{if}\;\;r=\theta(i, s), \\
0  & \textrm{otherwise.}
\end{cases}
\]
We then have $m(\theta(i, s))=1$, and hence
\begin{equation}\label{eq:Thetacsi}
\Theta (c, s)_i:=\XXi_0(c)_i\setminus \{\theta(i, s)\}
\end{equation}
forms an ordinary $\ADE$-configuration of type $\tau(c)_i$.
\qed
\end{proposition}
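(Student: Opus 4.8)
The plan is to translate everything into the geometry of the hyperplane intersections inside $\barCCC(w_{\Leech})$ and to exploit the structure of extended $\ADE$-configurations established in Section~\ref{subsec:m}. Fix a deep hole $c$ and an element $s\in\XXi_1(c)$, so that $\intf{s,f(c)}_L=1$. The starting point is Proposition~\ref{prop:msum}, which gives $\sum_{r\in\XXi_0(c)_i} m(r)\,r=f(c)$ for each component. Pairing this relation with $s$ yields
\[
\sum_{r\in\XXi_0(c)_i} m(r)\,\intf{r,s}_L=\intf{f(c),s}_L=1.
\]
Since all $m(r)\in\Z_{>0}$, the whole problem reduces to controlling the integers $\intf{r,s}_L$ for $r\in\XXi_0(c)_i$. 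The key sign input is that both $s$ and the $r\in\XXi_0(c)$ are Leech roots, i.e.\ elements of $\LLL(w_{\Leech})$, and by Theorem~\ref{thm:ConwayChamber}(2) each defines a wall of the Conway chamber $\CCC(w_{\Leech})$; hence $\intf{r,s}_L\ge 0$ whenever $r\ne s$, because distinct wall-defining $(-2)$-vectors of a standard fundamental domain of a Weyl group meet at non-obtuse angles (two $(-2)$-vectors $r,s$ spanning a negative-definite plane with $\intf{r,s}_L<0$ would force $s_r$ to move $s$ to a smaller root, contradicting that both bound the same chamber). Combined with $s\notin\XXi_0(c)$ (as $\intf{s,f(c)}_L=1\ne 0$), I get $\intf{r,s}_L\in\Z_{\ge 0}$ for all $r\in\XXi_0(c)_i$.

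Given nonnegativity and $m(r)\ge 1$, the displayed sum equal to $1$ forces exactly one $r$ in each component to have $\intf{r,s}_L=1$ and \emph{that} $r$ to satisfy $m(r)=1$, with all others contributing $0$. This immediately yields existence and uniqueness of $\theta(i,s)$ together with the equality $m(\theta(i,s))=1$. The final clause, that $\Theta(c,s)_i=\XXi_0(c)_i\setminus\{\theta(i,s)\}$ forms an ordinary $\ADE$-configuration of type $\tau(c)_i$, is then exactly the characterization recorded in Section~\ref{subsec:m}: for a connected extended $\ADE$-configuration $\tilSigma$, one has $m(r)=1$ if and only if $\tilSigma\setminus\{r\}$ is an ordinary $\ADE$-configuration of the same type $\tau$. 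Since $\XXi_0(c)_i$ is a connected extended $\ADE$-configuration of type $\tau(c)_i$ and $m(\theta(i,s))=1$, deleting $\theta(i,s)$ produces the asserted ordinary configuration.

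The main obstacle I expect is making the sign statement $\intf{r,s}_L\ge 0$ for distinct Leech roots fully rigorous rather than heuristic. The cleanest route is to invoke Theorem~\ref{thm:ConwayChamber}(2) directly: both $r$ and $s$ lie in $\LLL(w_{\Leech})$ and therefore define walls of the single Conway chamber $\CCC(w_{\Leech})$, and for a standard fundamental domain $D$ of a Weyl group the wall-defining roots satisfy $\intf{x,r}_L\ge 0$ for all $x\in D$; applying this with the wall datum of $s$ and a point of $D$ lying on the wall of $r$ gives the inequality. Alternatively, one can read the value $\intf{r_\lambda,r_\mu}_L$ off the explicit bijection~\eqref{eq:Lbij}, where $\intf{r_\lambda,r_\mu}_L=-1+d(\lambda,\mu)^2/2\ge -1$ lies in $\Z_{\ge -1}$, and exclude the value $-1$ for distinct Leech roots sharing a chamber. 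Once the sign is secured, the remaining steps are purely combinatorial and follow from the facts in Section~\ref{subsec:m} with no further computation.
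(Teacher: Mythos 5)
Your argument is correct, and it supplies something the paper itself does not: Proposition~\ref{prop:thetais} is stated there with no proof, attributed to the observations of Conway--Sloane, said to be verifiable from the computational data in~\cite{compdataLN}, and then explained only heuristically in Section~\ref{subsec:virtual} (a section of the elliptic fibration $\phi(c)$ on the virtual $K3$ surface meets each fiber transversally in one point, hence meets exactly one component of $\phi(c)^*(p_i)$, necessarily of multiplicity one). Your proof is exactly the rigorous lattice-theoretic form of that heuristic: pairing $\sum_{r\in\XXi_0(c)_i} m(r)\,r=f(c)$ (Proposition~\ref{prop:msum}) with $s$ gives $\sum_{r} m(r)\intf{r,s}_L=1$, and once the integers $\intf{r,s}_L$ are known to be nonnegative, the existence and uniqueness of $\theta(i,s)$, the equality $m(\theta(i,s))=1$, and the final clause (via the criterion from Section~\ref{subsec:m} that $m(r)=1$ if and only if deleting $r$ from the extended configuration leaves an ordinary one of the same type) all follow at once.

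One correction to your second justification of the sign statement: from~\eqref{eq:Lbij} one computes $\intf{r_\lambda,r_\mu}_L=-2+d(\lambda,\mu)^2/2$, not $-1+d(\lambda,\mu)^2/2$; the formula with $-1$ is~\eqref{eq:ellrlambda}, which pertains to the isotropic vector $\ell$ whose first coordinate is $-v^2/2$ rather than $-1-\lambda^2/2$. With the correct constant the argument is cleaner than you suggest: since $\Leech$ is even and has no roots, $d(\lambda,\mu)^2\ge 4$ for $\lambda\ne\mu$, so $\intf{r_\lambda,r_\mu}_L\ge 0$ immediately, and there is no residual value $-1$ to exclude (it would require a vector of norm $2$ in $\Leech$). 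I would lead with this computation rather than the chamber-wall argument, whose parenthetical justification as you phrase it ("$s_r$ moves $s$ to a smaller root") is not quite rigorous in the hyperbolic setting, even though the underlying fact about walls of a common standard fundamental domain is standard.
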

We choose and fix an element 
\begin{equation}\label{eq:choosez}
z\in \XXi_1(c),
\end{equation}
and let $U(c, z)$  denote the hyperbolic plane in $L_{\Leech}$ generated by $f(c)$ and $z$.
Its orthogonal complement $U(c, z)\sperp$ in $L_{\Leech}$
contains 
the set
\[
\Theta (c, z):=\Theta (c, z)_1 \sqcup\cdots \sqcup \Theta (c, z)_K
\]
of $(-2)$-vectors  that form an ordinary $\ADE$-configuration of type  $\tau(c)$,
where  $\Theta (c, z)_i$ is defined  by~\eqref{eq:Thetacsi}. 
Let $N$ be the Niemeier lattice such that $\tau(c)=\tauN$.
Since $U(c, z)\sperp$ is unimodular and of rank $24$,
the lattice
\[
N\spm (c, z):=U(c, z)\sperp
\]
is isomorphic to $N\spm$, and contains $\Theta (c, z)$ as a simple root system.
Thus $L_{\Leech}$ has two orthogonal direct-sum decompositions 
\begin{equation}\label{eq:LLeechUN}
L_{\Leech}=U_{\Leech}\oplus \Leech\spm = U(c, z)\oplus N\spm (c, z).
\end{equation}
Let $\gen{\Theta (c, z)}$ be the sublattice of $N\spm(c, z)$ generated by $\Theta (c, z)$, and 
we put
\[
\code (c, z):=N\spm(c, z)/\gen{\Theta (c, z)},
\]
which is a finite abelian group isomorphic to the code of $N\spm$.
We have a natural homomorphism 
\begin{equation}\label{eq:LtoM}
L_{\Leech}\;\to\; N\spm(c, z) \;\to\; \code (c, z),
\end{equation}
where $L_{\Leech}\to N\spm(c, z)$ is the  projection by the second decomposition~\eqref{eq:LLeechUN}.
\begin{proposition}\label{prop:XXi1M}
The mapping~\eqref{eq:LtoM} induces a bijection $\XXi_1(c)\cong  \code (c, z)$.
\qed
\end{proposition}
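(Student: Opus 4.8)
The plan is to establish that the map~\eqref{eq:LtoM} restricts to a bijection $\XXi_1(c) \cong \code(c, z)$ by analyzing how each element of $\XXi_1(c)$ decomposes under the orthogonal splitting $L_{\Leech} = U(c,z) \oplus N\spm(c,z)$. First I would take an arbitrary $s \in \XXi_1(c)$ and write its projection to $U(c,z)\sperp = N\spm(c,z)$ explicitly. Since $f(c) \in U(c,z)$ satisfies $\intf{f(c), s}_L = 1$ by definition of $\XXi_1(c)$, and since $z \in U(c,z)$ with $\intf{f(c), z}_L = 1$ as well, I can compute the $U(c,z)$-component of $s$ using the known Gram matrix of the hyperbolic plane $U(c,z)$ generated by $f(c)$ and $z$. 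Writing $s = s_U + s_N$ with $s_U \in U(c,z)$ and $s_N \in N\spm(c,z)$, the condition $\intf{f(c), s}_L = 1$ pins down enough of $s_U$ that $s_N$ is determined modulo $\gen{\Theta(c,z)}$; the image of $s$ in $\code(c,z)$ is precisely the class of $s_N$.

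The key structural input is Proposition~\ref{prop:thetais}: for each connected component index $i$, there is a unique $\theta(i,s) \in \XXi_0(c)_i$ with $\intf{\theta(i,s), s}_L = 1$, and all other roots of $\XXi_0(c)_i$ are orthogonal to $s$. Combined with Proposition~\ref{prop:msum}, which gives $\sum_{r \in \XXi_0(c)_i} m(r)\, r = f(c)$, this controls the pairing of $s_N$ against the simple roots $\Theta(c,z)_i = \XXi_0(c)_i \setminus \{\theta(i,s)\}$. I would use these pairings to show that the class of $s_N$ in $N\spm(c,z)/\gen{\Theta(c,z)} = \code(c,z)$ is a genuine codeword, i.e.\ that the map~\eqref{eq:LtoM} lands in $\code(c,z)$ and is well defined on $\XXi_1(c)$. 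Injectivity should follow because two elements $s, s'$ of $\XXi_1(c)$ with the same image in $\code(c,z)$ differ by an element of $\gen{\Theta(c,z)}$ after adjusting $U(c,z)$-components, and the constraint $\intf{f(c), s}_L = \intf{f(c), s'}_L = 1$ together with the $(-2)$-vector condition $\intf{s,s}_L = \intf{s',s'}_L = -2$ forces $s = s'$.

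For surjectivity I would reverse the construction: given a codeword $\gamma \in \code(c,z)$, lift it to a coset representative $g \in N\spm(c,z)$ and seek $s = a f(c) + b z + g' \in L_{\Leech}$ (with $g'$ a suitable modification of $g$ by elements of $\gen{\Theta(c,z)}$ and a tweak of the $U(c,z)$-part) satisfying $\intf{s,s}_L = -2$, $\intf{w_{\Leech}, s}_L = 1$, and $\intf{f(c), s}_L = 1$. The conditions $\intf{w_{\Leech}, s}_L = 1$ and $\intf{f(c), s}_L = 1$ are two linear constraints fixing the hyperbolic-plane component, after which $\intf{s,s}_L = -2$ becomes a single equation in the free choice of $g'$ within its coset, and I would need the discriminant-form data of $\gen{\Theta(c,z)}$ (equivalently, the square-norm of $\gamma$ in the discriminant group $\gen{R}\dual/\gen{R}$) to guarantee a solution with $\intf{g', g'}$ of the right parity and value. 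This is where I expect the main obstacle: verifying that for every codeword the resulting quadratic equation over the coset admits a solution yielding a Leech root $s \in \LLL(w_{\Leech})$, rather than merely a $(-2)$-vector of $L_{\Leech}$ with the wrong $w_{\Leech}$-pairing. I suspect the cleanest route is to invoke that $\code(c,z)$ is the glue code of a Niemeier lattice, so the norms of codewords are constrained precisely so that the lift closes up; alternatively, one can sidestep the case analysis by the explicit computational verification promised in Remark~\ref{rem:affinesubspace}, which reduces $\XXi_1(c)$ to an effectively computable finite set via the negative-definite inhomogeneous quadratic form, and then match cardinalities $\Card(\XXi_1(c)) = \Card(\code(c,z))$ directly.
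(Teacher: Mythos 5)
First, a point of calibration: the paper does not prove Proposition~\ref{prop:XXi1M} deductively. It is stated with a q.e.d.\ because it is taken as an observation from Conway--Sloane~\cite{CS1982Note} and verified against the computational data in~\cite{compdataLN}; the Mordell--Weil interpretation via the virtual $K3$ surface $\XX$ in Section~\ref{subsec:virtual} is explicitly only a heuristic. Measured as an attempt at an actual lattice-theoretic proof, your argument has a concrete gap at the injectivity step. Membership in $\XXi_1(c)$ imposes \emph{three} conditions on $s$: $\intf{s,s}_L=-2$, $\intf{f(c),s}_L=1$, and $\intf{w_{\Leech},s}_L=1$; you invoke only the first two, and they do not force $s=s'$. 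Indeed, the general element with the same image in $\code(c,z)$ and with $\intf{f(c),\cdot}_L=1$ is $s'=s+k f(c)+t$ with $k\in\Z$ and $t\in\gen{\Theta(c,z)}$ (the ambiguity in the $U(c,z)$-component orthogonal to nothing but $f(c)$ is exactly $\Z f(c)$), and the norm condition $\intf{s',s'}_L=-2$ merely determines $k=-\intf{s,t}_L-\tfrac12\intf{t,t}_L$ for \emph{every} choice of $t$. Only after adding the forgotten condition $\intf{w_{\Leech},s'}_L=1$, which gives $kh=-\intf{w_{\Leech},t}_L$, do you get a genuine constraint, namely that $2\intf{w_{\Leech},t}_L=h\bigl(2\intf{s,t}_L+\intf{t,t}_L\bigr)$ must have $t=0$ as its only solution in $\gen{\Theta(c,z)}$. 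That statement is the lattice-theoretic content of ``each Mordell--Weil class contains at most one section class'' (positivity of the height pairing) and requires a real argument or a case check; it is not a formal consequence of the constraints you list.

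On surjectivity you are candid that the construction may not close up, and indeed the existence, in every coset of $\gen{\Theta(c,z)}$ in $N\spm(c,z)$, of a representative satisfying the quadratic condition forced by $\intf{w_{\Leech},s}_L=1$ is precisely the nontrivial arithmetic fact about the glue code that \cite{CS1982Note} records hole by hole. Two smaller corrections: there is nothing to verify about the map ``landing in'' $\code(c,z)$, since $\code(c,z)$ is by definition the full quotient $N\spm(c,z)/\gen{\Theta(c,z)}$; and the $N\spm(c,z)$-component of $s$ is determined exactly by $s$, not merely modulo $\gen{\Theta(c,z)}$. Your fallback --- enumerating $\XXi_1(c)$ as the solution set of a negative-definite inhomogeneous quadratic problem as in Remark~\ref{rem:affinesubspace} and comparing cardinalities with $\Card(\code(c,z))$ --- is legitimate and is in substance what the paper itself relies on, so the deductive portion of your proposal is incomplete while the computational portion coincides with the paper's actual justification.
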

\begin{remark}\label{rem:thetaiz}
The $(-2)$-vector $\theta(i, z)\in \XXi_0(c)_i$ satisfies the following:
\begin{enumerate}[(a)]
\item $\intf{f(c), \theta(i, z)}_L=0$, $\intf{z, \theta(i, z)}_L=1$, 
\item  if $j\ne i$, then $\intf{r, \theta(i, z)}_L=0$ for all $r\in \Theta(c, z)_j$,
and 
\item $\Theta(c, z)_i\cup\{\theta(i, z)\}$ forms an extended $\ADE$-configuration of type $\tau(c)_i$.
\end{enumerate}
Since $f(c), z$ and the $24$ vectors in $\Theta(c, z)$ span $L_{\Leech}\tensor \Q$,
these properties characterize the vector $\theta(i, z)\in \XXi_0(c)_i$
uniquely.
\end{remark}
%
%
%
%
\section{The virtual $K3$ surface $\XX$}\label{sec:XX}
The results in Section~\ref{sec:shape}
can be interpreted as geometric results 
on a virtual, non-existing $K3$ surface $\XX$.
\subsection{$K3$ surfaces}\label{subsec:K3}
First, we give a brief review of lattice theoretic aspects of the theory of (non-virtual) $K3$ surfaces.
See the book~\cite[Chapter 11]{MWLbook} for details.
See also~\cite{Shimada2022} for a review  
from a computational point of view.
\par
For simplicity, we work over an algebraically closed field of characteristic $\ne 2,3$.
Let $X$ be a $K3$ surface.
We denote by  $S_X$ the N\'eron-Severi lattice of $X$,
that is, $S_X$ is the lattice of numerical equivalence classes of divisors on $X$.
Let $\intf{\nullvars}_S$ denote  
the intersection form of $S_X$.
For a curve $C$ on $X$, let $[C]\in S_X$ be  the class of $C$.
Suppose that the Picard number $\rank S_X$ is $>1$.
Then $S_X$ is an even hyperbolic lattice.
Let $\PPP_X$ be  the positive cone of $S_X$ that contains an ample class.
The \emph{nef-and-big cone} $\NNN_X$ of $X$ is defined by 
\[
\NNN_X:=\set{x\in \PPP_X}{\intf{x, [C]}_S\ge 0\;\;\textrm{for all curves $C$ on $X$}}.
\]
We denote by $\barNNN_X$ the closure of $\NNN_X$ in $\barPPP_X$.
We  put 
\[
\Rats(X):=\set{r\in S_X}{\textrm{$r$ is the class of a smooth rational curve on $X$}}.
\]
Then we have the following: 
\begin{proposition}\label{prop:Rats}
\begin{enumerate}[{\rm (1)}]
\item 
The nef-and-big cone $\NNN_X$ is 
the standard fundamental domain of the Weyl group $W(S_X)$ containing an ample class.
\item A $(-2)$-vector $r$ of $S_X$ belongs to $\Rats(X)$
if and only if $r$ defines a wall of the standard fundamental domain $\NNN_X$.
\qed
\end{enumerate}
\end{proposition}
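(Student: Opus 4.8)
The plan is to derive both statements from three standard facts about the $K3$ surface $X$, combined with the chamber geometry of $\PPP_X$ recalled in Section~\ref{subsec:hyperboloc}. The three facts are: (i) Riemann--Roch together with Serre duality, which for a $(-2)$-vector $r\in \RRR_{S_X}$ gives $\chi(\mathcal{O}_X(r))=2+\intf{r,r}_S/2=1$ and $h^0(r)+h^0(-r)\ge 1$, so that one of $\pm r$ is the class of a nonzero effective divisor; (ii) adjunction, by which an irreducible curve $C$ satisfies $\intf{[C],[C]}_S=2p_a(C)-2\ge -2$, with equality $\intf{[C],[C]}_S=-2$ forcing $C$ to be a smooth rational curve; and (iii) the hyperbolic geometry of $\PPP_X$, namely that two vectors of $\barPPP_X$ have product $\ge 0$, and that $\intf{x,y}_S=0$ with $x\in \PPP_X$ and $y\in \barPPP_X$ forces $y=0$. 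I will also use the Nakai--Moishezon criterion in the form: a class $x\in \PPP_X$ is ample if and only if $\intf{x,[C]}_S>0$ for every curve $C$, so that the ample cone $A_X$ is the interior of $\NNN_X$ and $\NNN_X=\ol{A_X}\cap \PPP_X$; and the local finiteness inside $\PPP_X$ of the arrangement $\{(r)\sperp\}_{r\in\RRR_{S_X}}$.

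For part~(1), the first step is to check that $A_X$ meets no hyperplane $(r)\sperp$. Given an ample class $a$ and $r\in\RRR_{S_X}$, fact~(i) lets me take $r$ (or $-r$) effective, say $r=\sum a_i[C_i]$ with $a_i>0$; then $\intf{a,r}_S=\sum a_i\intf{a,[C_i]}_S>0$, so $\intf{a,r}_S\ne 0$. Since $A_X$ is a convex cone disjoint from every $(r)\sperp$, it lies in a single connected component $D$ of $\PPP_X\setminus\bigcup_r(r)\sperp$. Next I would prove the reverse inclusion $D\subseteq A_X$: for $x\in D$ and an irreducible curve $C$, if $\intf{[C],[C]}_S\ge 0$ then $[C]\in\barPPP_X$ (it pairs positively with $a$, hence lies on the ample side), so $\intf{x,[C]}_S>0$ by~(iii); if $\intf{[C],[C]}_S=-2$ then $r:=[C]$ is a root with $\intf{a,r}_S>0$ for every ample $a\in A_X\subseteq D$, and since $D$ is connected and avoids $(r)\sperp$ we get $\intf{x,r}_S>0$. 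By Nakai--Moishezon $x$ is then ample, so $D=A_X$. Hence $A_X$ is the connected component containing the ample classes, and its closure $\NNN_X$ is the standard fundamental domain of $W(S_X)$ containing an ample class.

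For part~(2), I write $D=A_X$ and recall that $r\in\RRR_{S_X}$ defines a wall of $\NNN_X=\ol D$ exactly when $\intf{\cdot,r}_S\ge 0$ on $\NNN_X$ and $\NNN_X\cap(r)\sperp$ has nonempty interior in $(r)\sperp$. The easy implication is ``wall $\Rightarrow$ rational curve''. Such an $r$ is effective by~(i), since its negative would violate $\intf{\cdot,r}_S\ge 0$ on $\NNN_X$. Choosing $x_0$ in the relative interior of the facet, so $\intf{x_0,r}_S=0$ and $\intf{x_0,r'}_S>0$ for all roots $r'\notin\R r$, I write $r=\sum a_i[C_i]$; nefness of $x_0$ forces each $\intf{x_0,[C_i]}_S=0$, fact~(iii) rules out $\intf{[C_i],[C_i]}_S\ge 0$, so each $C_i$ is smooth rational by~(ii), and $[C_i]$ is a root orthogonal to $x_0$, hence proportional to $r$; comparing self-intersections gives $[C_i]=r$, whence $r=[C]$ for a single smooth rational curve and $r\in\Rats(X)$.

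The reverse implication ``rational curve $\Rightarrow$ wall'' is the step I expect to be the main obstacle, since it requires exhibiting a genuine facet rather than merely a face. For $r=[C]\in\Rats(X)$ note first that $r$ is a positive root, i.e.\ $\intf{\cdot,r}_S\ge 0$ on $\NNN_X$; for the remaining, harder half the plan is an explicit construction. Fix an ample class $a$ and set $x_0:=a+\tfrac12\intf{a,r}_S\,r$. A direct computation gives $\intf{x_0,x_0}_S=\intf{a,a}_S+\tfrac12\intf{a,r}_S^2>0$ and $\intf{x_0,r}_S=0$, so $x_0\in\PPP_X\cap(r)\sperp$. For any irreducible curve $C'\ne C$ one has $\intf{[C],[C']}_S\ge 0$, so $\intf{x_0,[C']}_S=\intf{a,[C']}_S+\tfrac12\intf{a,r}_S\intf{[C],[C']}_S>0$, while $\intf{x_0,[C]}_S=0$; thus $x_0$ is nef, i.e.\ $x_0\in\NNN_X$. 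The same computation shows $\intf{x_0,r'}_S>0$ for every root $r'\notin\R r$ (such an $r'$, being effective, has a component other than $C$), so by local finiteness of the arrangement a neighborhood of $x_0$ in $(r)\sperp$ stays in $\NNN_X$; hence $\NNN_X\cap(r)\sperp$ has nonempty interior in $(r)\sperp$ and $r$ defines a wall. Since every facet of $\ol D=\NNN_X$ lies on some $(r')\sperp$, the two implications together identify $\Rats(X)$ with the set of $(-2)$-vectors defining walls of $\NNN_X$, proving~(2).
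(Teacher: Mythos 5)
Your argument is correct, but note that the paper does not actually prove this proposition: it is stated as a known fact about $K3$ surfaces with a pointer to~\cite[Chapter 11]{MWLbook}, so there is no in-paper proof to compare against. What you have written is the standard textbook argument, correctly assembled: Riemann--Roch plus Serre duality to make one of $\pm r$ effective for every root $r$ (so the ample cone avoids every $(r)\sperp$ and sits in a single chamber), adjunction to force irreducible classes of negative square to be $(-2)$-curves, the real Nakai--Moishezon/Kleiman criterion to identify that chamber with the ample cone and hence its closure with $\NNN_X$, and the explicit nef class $x_0=a+\tfrac12\intf{a,r}_S\,r$ to exhibit a genuine facet for each $r\in\Rats(X)$ --- which is indeed the one point that needs an actual construction rather than a formal argument. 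The only imprecision worth flagging is in the ``wall $\Rightarrow$ curve'' step: for a root $r'\notin\R r$ a generic point $x_0$ of the facet gives only $\intf{x_0,r'}_S\ne 0$ in general (the inequality $\intf{x_0,r'}_S>0$ holds for the \emph{effective} roots $r'$, by nefness), but since your argument only uses the nonvanishing to conclude that a root orthogonal to $x_0$ is proportional to $r$, nothing breaks.
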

Let $f$ and $z$ be  vectors of $S_X$  such that  
\[
\intf{f, f}_S=0, 
\quad
\intf{f,z}_S=1,
\quad
\intf{z,z}_S=-2, 
\quad
f\in \barNNN_X,
\quad 
z\in \Rats(X).
\]
Then we have an elliptic fibration $\phi\colon X\to \P^1$
with a section $\zerosec\colon \P^1\to X$
such that $f$ is the class of a fiber of $\phi$ and that $z$ is the class of the image of $\zerosec$.
Let $U_{f,z}$ denote the hyperbolic plane in  $S_X$ generated by $f$ and $z$, and 
let  $U_{f,z}\sperp$ be the orthogonal complement of $U_{f,z}$ in $S_X$.
Note that $U_{f,z}\sperp$ is negative-definite.
Let $R(U_{f,z}\sperp)$ be the set of $(-2)$-vectors in $U_{f,z}\sperp$,
and $\gen{R(U_{f,z}\sperp)}$ the sublattice of $U_{f,z}\sperp$ generated by $R(U_{f,z}\sperp)$.
We put 
\[
\tilTheta_{\phi}:=\set{[C]}{\textrm{$C$ is a smooth rational curve on $X$ mapped to a point by $\phi$}}.
\]
Then  $\tilTheta_{\phi}$ forms an extended $\ADE$-configuration.
Let 
\[
\tilTheta_{\phi}=\tilTheta_{\phi, 1}\sqcup\dots\sqcup \tilTheta_{\phi, K}
\]
be the decomposition according to the connected components of the dual graph of $\tilTheta_{\phi}$.
Then the fibration $\phi\colon X\to \P^1$ has exactly $K$ reducible fibers $\phi\inv (p_1), \dots, \phi\inv (p_K)$,
and, under an appropriate  numbering of the points $p_1, \dots, p_K\in \P^1$, 
we have
\[
[C]\in  \tilTheta_{\phi, i}\;\;\Longleftrightarrow\;\; C\subset \phi\inv (p_i)
\]
for a smooth rational curve $C$ on $X$.
Let $m\colon \tilTheta_{\phi, i}\to \Z_{>0}$ be the function defined in Section~\ref{subsec:m}.
Then we have 
\begin{equation}\label{eq:phistar}
\phi^* (p_i)=\sum_{C\subset \phi\inv (p_i)} m([C]) C,
\end{equation}
where $C$ runs through the set of irreducible components of $\phi\inv (p_i)$.
Let $C_{i0}$ be the unique irreducible component of $\phi\inv (p_i)$ that intersects 
the zero section $\zeta$, and 
we put
\[
\Theta_{\phi, \zeta, i}:=\tilTheta_{\phi, i}\setminus\{[C_{i0}]\}.
\]
Then  each $\Theta_{\phi, \zeta, i}$ forms  a connected  ordinary $\ADE$-configuration. 
We put 
\[
\Theta_{\phi, \zeta}:=\Theta_{\phi, \zeta, 1}\sqcup\dots\sqcup \Theta_{\phi, \zeta, K}.
\]
%
 Then we have
 \[
 \Theta_{\phi, \zeta}=R(U_{f,z}\sperp)\cap \Rats(X).
 \]
We can  calculate the classes 
\[
[C_{i0}]=f-\sum_{r \in\Theta_{\phi, \zeta, i}} m(r)r
\]
of smooth rational curves in fibers of $\phi$ that intersect $\zeta$.
Note that $\sum_{r \in\Theta_{\phi, \zeta, i}} m(r)r$ is the highest root of $\gen{\Theta_{\phi, \zeta, i}}$ with respect to $\Theta_{\phi, \zeta, i}$.
\par
Let $\MW_{\phi, \zeta}$ be the Mordell-Weil group of the Jacobian fibration $\phi\colon X\to \P^1$
with the zero section $\zeta\colon \P^1\to X$, 
that is, $\MW_{\phi, \zeta}$ is the group of sections $s\colon \P^1\to X$ 
of $\phi$ with $\zeta$ being $0$.
For $s\in \MW_{\phi, \zeta}$, let $[s]\in \Rats(X)$ denote the class of the image of the section $s$.
Then we have the following famous result. See~\cite{MWLbook}.
\begin{proposition}\label{prop:MW}
The mapping $s\mapsto [s]$ induces an isomorphism
\begin{equation}\label{eq:MW}
 \MW_{\phi, \zeta}\;\;\cong\;\; S_X/(U_{f,z}\oplus \gen{\Theta_{\phi, \zeta}})
\end{equation}
of abelian groups.
\qed
\end{proposition}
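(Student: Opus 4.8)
The statement is the Shioda--Tate formula for the elliptic fibration $\phi\colon X\to\P^1$, and the plan is to prove it by restricting divisor classes to the generic fiber. Let $K:=k(\P^1)$ and let $X_\eta$ be the generic fiber of $\phi$, an elliptic curve over $K$ whose origin is the $K$-rational point $O_\eta$ cut out by the zero section $\zeta$. Restriction of divisors gives a homomorphism $\mathrm{res}\colon S_X\to\mathrm{Pic}(X_\eta)$, and the first thing to establish is that $\mathrm{res}$ is surjective (every divisor on $X_\eta$ is the restriction of its Zariski closure in $X$) with kernel equal to the subgroup $V\subset S_X$ of classes of \emph{vertical} divisors, i.e.\ divisors supported on fibers of $\phi$.

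The subgroup $V$ is generated by the general fiber class $f$ together with the classes of all irreducible components of the reducible fibers, that is, by $f$ and $\tilTheta_{\phi}$. Using the relation $[C_{i0}]=f-\sum_{r\in\Theta_{\phi,\zeta,i}}m(r)\,r$ for the component $C_{i0}$ of $\phi\inv(p_i)$ meeting $\zeta$, each class $[C_{i0}]$ already lies in $\Z f+\gen{\Theta_{\phi,\zeta}}$, so in fact $V=\Z f+\gen{\Theta_{\phi,\zeta}}$. Since $\intf{f,z}_S=1$ and $\Theta_{\phi,\zeta}\subset U_{f,z}\sperp$, adjoining $z$ to $V$ produces the orthogonal direct sum $V+\Z z=U_{f,z}\oplus\gen{\Theta_{\phi,\zeta}}$.

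Next I would use the structure of $\mathrm{Pic}(X_\eta)$. Because $X_\eta$ carries the rational point $O_\eta$, the degree map splits $\mathrm{Pic}(X_\eta)\cong\Z\oplus\mathrm{Pic}^0(X_\eta)$, with $[O_\eta]$ generating the degree part, and the classical identification $P\mapsto[P]-[O_\eta]$ gives $\mathrm{Pic}^0(X_\eta)\cong X_\eta(K)=\MW_{\phi,\zeta}$. The restriction of $z$ is exactly $[O_\eta]$, so quotienting $\mathrm{res}$ by the degree part corresponds to further dividing $S_X$ by $\Z z$. Combining this with the previous step yields
\[
S_X/(U_{f,z}\oplus\gen{\Theta_{\phi,\zeta}})\;\cong\;\mathrm{Pic}(X_\eta)/\Z[O_\eta]\;\cong\;\mathrm{Pic}^0(X_\eta)\;\cong\;\MW_{\phi,\zeta}.
\]
Finally, to see that this isomorphism is the one \emph{induced by $s\mapsto[s]$}, note that a section $s$ restricts to the point $P_s\in X_\eta(K)$ with $[s]|_{X_\eta}=[P_s]$; tracing $[s]$ through the identifications sends it to $[P_s]-[O_\eta]$, i.e.\ to $s\in\MW_{\phi,\zeta}$, so $s\mapsto[s]\bmod(U_{f,z}\oplus\gen{\Theta_{\phi,\zeta}})$ is the inverse of the constructed isomorphism and is therefore itself an isomorphism.

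I expect the main obstacle to be the two foundational facts about restriction to the generic fiber: the surjectivity of $\mathrm{res}$ and the identification of its kernel with the vertical lattice $V$. The inclusion $V\subseteq\ker\mathrm{res}$ is clear, while the reverse inclusion requires the standard localization argument: a class in $\ker\mathrm{res}$ restricts to a principal divisor on $X_\eta$, and after subtracting that principal divisor it is represented by a divisor supported on finitely many fibers, whose class therefore lies in $V$. Once this is in place, the remaining steps---the elliptic-curve computation of $\mathrm{Pic}(X_\eta)$ and the bookkeeping identifying $V+\Z z$ with $U_{f,z}\oplus\gen{\Theta_{\phi,\zeta}}$---are routine.
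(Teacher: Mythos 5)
Your proposal is correct: it is the standard Shioda--Tate argument (restriction to the generic fiber, identification of the kernel with the vertical classes plus the zero section, and the splitting of $\mathrm{Pic}(X_\eta)$ by the degree map), and the identification of the vertical lattice with $\Z f+\gen{\Theta_{\phi,\zeta}}$ via $[C_{i0}]=f-\sum m(r)r$ is exactly the right bookkeeping. The paper gives no proof of its own and simply cites~\cite{MWLbook}, where this same argument appears, so there is nothing to compare beyond noting that you have supplied the standard proof the paper defers to.
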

\begin{remark}\label{rem:Shimada2022}
We have an algorithm~\cite[Section 4]{Shimada2022} that calculates,
for any $v\in S_X$,
the class $[s_v]\in \Rats(X)$ of the section $s_v\in  \MW_{\phi, \zeta}$
corresponding to the class of $v$ modulo $ U_{f,z}\oplus \gen{\Theta_{\phi, \zeta}}$ via~\eqref{eq:MW}.
\end{remark}
\subsection{Geometry  of the virtual $K3$ surface $\XX$}\label{subsec:virtual}
Let $\XX$ be a \emph{virtual} $K3$ surface with an isometry
\begin{equation}\label{eq:SLisom}
S_{\XX}\cong L_{\Leech}.
\end{equation}
Applying  to $\XX$ the results of  $K3$ surfaces
explained in Section~\ref{subsec:K3},
we obtain natural explanations to the  results 
Propositions~\ref{prop:msum},~\ref{prop:thetais}, and~\ref{prop:XXi1M}
that were  observed in~\cite{CS1982Note}.
Remark that such a $K3$ surface  $\XX$ does \emph{not} exist,
and the arguments below should be considered only as a heuristic guide.
\par
By composing~\eqref{eq:SLisom} with an element of the Weyl group $W(L_{\Leech})$,
we can assume that~\eqref{eq:SLisom} maps the nef-and-big cone $\NNN_{\XX}$ 
of $\XX$ to the Conway chamber $\CCC(w_{\Leech})$.
In the following, we identify $L_{\Leech}$ with $S_{\XX}$, and $\CCC(w_{\Leech})$ with $\NNN_{\XX}$. 
\par
By Theorem~\ref{thm:ConwayChamber} and Proposition~\ref{prop:Rats},  the set $\Rats(\XX)$ of the classes of smooth rational curves on $\XX$ is equal to 
the set $\LLL(w_{\Leech})$ of Leech roots of $w_{\Leech}$.
The primitive vector $w_{\Leech} \in S_{\XX}\cap \barNNN_{\XX}$ 
corresponds to the class of a fiber of an elliptic fibration 
\[
\phi_{\Leech} \colon \XX\to \P^1.
\]
Since $\intf{w_{\Leech}, r_{\lambda}}_L=1$ for any $r_{\lambda}\in \LLL(w_{\Leech})$,
every smooth rational  curve on $\XX$ is a section of $\phi_{\Leech} $.
If we choose a smooth rational  curve and consider it as a zero section $\zeta\colon \P^1\to\XX$ of  $\phi_{\Leech}$, then, 
by Proposition~\ref{prop:MW},
the Mordell-Weil group of this Jacobian fibration $(\phi_{\Leech}, \zeta)$  is isomorphic to $\Z^{24}$,
because we have $\Theta_{\phi_{\Leech}, \zeta}=\emptyset$.
\begin{remark}
More strongly, 
the Mordell-Weil lattice~(see~\cite{MWLbook}) of $(\phi_{\Leech}, \zeta)$ is isomorphic to the Leech lattice $\Leech$.
\end{remark}
Let $c\in \Leech\spm\tensor\Q$ be a deep hole.
We have defined in~\eqref{eq:definitionfc}
a  primitive vector $f(c)\in L_{\Leech}$ 
generating a half-line in $\barCCC(w_{\Leech})\cap \bdr \barPPP_{L}$.
Then the  vector $f(c) \in S_{\XX}\cap \barNNN_{\XX}$ 
is the class of a fiber of an elliptic fibration 
\[
\phi(c) \colon \XX\to \P^1.
\]
The set $\XXi_0(c)$ is the set of classes of smooth rational curves on $\XX$ that are contained in fibers of  $\phi(c) $.
Let $\phi(c)^*(p_i)$  ($i=1, \dots, K$) be the reducible fibers of $\phi(c)$.
Renumbering the points $p_1, \dots, p_K\in \P^1$ if necessary,  we have that, 
for each $i=1, \dots, K$, the set $\XXi_0(c)_i$ is the set of classes 
$[C]$ of irreducible components $C$  of  $\phi(c)^*(p_i)$
with $m([C])\in \Z_{>0}$ being the multiplicity of $C$ in the fiber $\phi(c)^*(p_i)$.
The set $\XXi_1(c)$ is the set of classes of sections of $\phi(c) $. 
Hence Proposition~\ref{prop:msum} follows from~\eqref{eq:phistar},
Proposition~\ref{prop:thetais} follows from the fact that a section and a fiber intersect only at one point and with intersection multiplicity $1$.
We choose $z\in \XXi_1(c)$ as in~\eqref{eq:choosez}.
Then we have a section $\zeta\colon \P^1\to \XX$ whose class is $z$.
We consider $\zeta$ as a zero section of $\phi(c)$.
Then $\theta(i, z)$ is the class of the smooth rational curve in $\phi(c)^*(p_i)$ that intersects  $\zeta$, 
and hence $\Theta(c, z)$ is the set of classes of  irreducible components of  reducible fibers of  $\phi(c)$ 
that are disjoint from $\zeta$.
Hence 
the Mordell-Weil group of the Jacobian fibration $(\phi(c), \zeta)$ is isomorphic to $\code (c, z)=N\spm(c, z)/\gen{\Theta (c, z)}$
by Proposition~\ref{prop:MW}, and 
Proposition~\ref{prop:XXi1M} also follows from Proposition~\ref{prop:MW}.
%
%
%
%
\section{Constructions of the Leech lattice}\label{sec:costruction}
Let $N$ be a  Niemeier lattice with roots.
Let
\[
\Theta=\Theta_1\sqcup\dots\sqcup \Theta_K\;\;\subset\;\; R \;\;\subset\;\; \gen{R}  \;\;\subset\;\; N\spm
\]
and $\tauN=\tauN_1+\cdots +\tauN_K$ be defined as in Section~\ref{sec:Niemeier}.
We give a construction of the Leech lattice $\Leech\spm$ for each codeword $\gamma$ of the finite abelian group $N\spm/\gen{R}$.
\begin{remark}
By Proposition~\ref{prop:transitive}, 
this construction does not depend on the choice of the simple root system $\Theta$ of $\gen{\Theta}=\gen{R}$
up to the action of $\OG(N\spm)$.
\end{remark}
First we present a lemma about the discriminant group of
a negative-definite root lattice.  
Let $\Sigma=\{r_1, \dots, r_k\}$ be a set of $(-2)$-vectors that form a \emph{connected} ordinary 
$\ADE$-configuration, and let $\gen{\Sigma}$ denote  the lattice generated by $\Sigma$.
Let $m\colon \Sigma\to \Z_{>0}$ be the function defined in Section~\ref{subsec:m}.
We put 
\[
J(\Sigma):=\set{j}{m(r_j)=1}\;\;\subset\;\;\{1, \dots, k\}.
\]
Let $r_1\dual, \dots, r_k\dual$ be the basis of $\gen{\Sigma}\dual$ dual to 
the basis $r_1, \dots, r_k$ of $\gen{\Sigma}$.
Then the following can be checked easily for each connected $\ADE$-configuration $\Sigma$. 
\begin{lemma}\label{lem:Jbij}
The mapping $j\mapsto r_j\dual \bmod \gen{\Sigma}$ gives rise to  a bijection from 
$J(\Sigma)$ to 
the set of non-zero elements of the discriminant group $\gen{\Sigma}\dual / \gen{\Sigma}$.
\qed 
\end{lemma}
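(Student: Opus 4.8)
The plan is to prove the bijection by splitting it into a counting step and an injectivity step, while thinking of the dual basis vectors $r_j\dual$ as the fundamental weights of the root system, so that $\gen{\Sigma}\dual/\gen{\Sigma}$ plays the role of the ``weight modulo root'' group. First I would record the elementary observation that, once we know
\[
\Card(J(\Sigma)) = \Card(\gen{\Sigma}\dual/\gen{\Sigma}) - 1
\]
and that the classes $r_j\dual \bmod \gen{\Sigma}$ for $j\in J(\Sigma)$ are pairwise distinct and all non-zero, the map is automatically a bijection onto the set of non-zero elements of the discriminant group. Thus the whole statement reduces to this counting identity together with an injectivity-plus-nonvanishing claim.

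For the counting identity I would use the table of marks $m$ for the connected types $A_l, D_m, E_6, E_7, E_8$ recorded in \cite[Figure 23.1]{CSbook} and \cite[Figure 1.8]{Ebeling2013}, together with the well-known discriminant groups of the $\ADE$ root lattices, namely $\Z/(l+1)$ for $A_l$, either $\Z/4$ (for $m$ odd) or $(\Z/2)^2$ (for $m$ even) for $D_m$, and $\Z/3, \Z/2, 0$ for $E_6, E_7, E_8$. In each case the number of nodes with $m(r_j)=1$ is exactly one less than the order of the discriminant group: $l$ nodes for $A_l$, three for $D_m$, two for $E_6$, one for $E_7$, none for $E_8$. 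Conceptually this is the classical fact that the special vertices of the extended configuration $\Sigma\cup\{\theta\}$, that is, those with mark $1$ including the affine node $\theta$ (for which $m(\theta)=1$), form a torsor under $\gen{\Sigma}\dual/\gen{\Sigma}$ acting simply transitively; discarding $\theta$ leaves precisely the set $J(\Sigma)$.

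For non-vanishing and injectivity I would compute the relevant classes from the inverse Gram matrix, using $\intf{r_i\dual, r_j\dual} = (G^{-1})_{ij}$, where $G$ is the Gram matrix of $\Sigma$. In the cyclic cases ($A_l$, $D_m$ with $m$ odd, $E_6$, $E_7$, $E_8$) the discriminant group is generated by a single class, and the classes $r_j\dual \bmod \gen{\Sigma}$ for $j\in J(\Sigma)$ come out as distinct non-zero multiples of a generator, so injectivity and non-vanishing are immediate; combined with the count, this already forces the bijection. The main obstacle is the single non-cyclic case, $D_m$ with $m$ even, where the discriminant group is $(\Z/2)^2$: here I must check that the three mark-$1$ weights represent the three distinct non-zero elements rather than collapsing onto fewer classes. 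I would verify this directly from $G^{-1}$, or equivalently invoke the triality symmetry permuting the three special nodes of the extended diagram of type $D_m$. Once this case is settled, the counting step promotes injectivity to the desired bijection uniformly across all $\ADE$-types.
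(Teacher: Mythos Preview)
Your proposal is correct and follows essentially the same approach as the paper: the paper simply states that the lemma ``can be checked easily for each connected $\ADE$-configuration $\Sigma$'' and provides no further argument, so your case-by-case verification via counting and injectivity is exactly the intended check, only spelled out in more detail.
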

\begin{definition}\label{def:canonical0}
For a codeword $\alpha\in \gen{\Sigma}\dual / \gen{\Sigma} $,
we define its \emph{canonical representative} $\tilde{\alpha}\in \gen{\Sigma}\dual$
by the following:
if $\alpha=0$, then $\tilde{\alpha}=0$,
and 
if $\alpha\ne 0$, then $\tilde{\alpha}=r_j\dual$,
where $j\in J(\Sigma)$ corresponds to $\alpha$ by the bijection in 
Lemma~\ref{lem:Jbij}.
\end{definition}
We put
\[
A_i:=\gen{\Theta_{i}}\dual/\gen{\Theta_{i}}.
\]
By the natural embedding 
\begin{equation}\label{eq:dualembedding}
N\spm\inj \gen{R}\dual=\gen{\Theta_1}\dual\oplus \cdots \oplus \gen{\Theta_K}\dual, 
\end{equation}
we have an embedding 
\begin{equation}\label{eq:codeembedding}
N\spm/\gen{R}\;\;\inj\;\; A_1\times \cdots\times A_K.
\end{equation}
Let $\gamma\in N\spm/\gen{R}$ be a codeword.
For $i=1, \dots, K$,
let $\gamma_i\in A_i$ be the $i$th component of $\gamma$ by the embedding~\eqref{eq:codeembedding}.
Let $\tilde{\gamma}_i\in \gen{\Theta_i}\dual$ be the canonical representative of $\gamma_i$, and 
we put
\[
v_{\gamma}:=\tilde{\gamma}_1+ \cdots +\tilde{\gamma}_K \;\;\in\;\;  \gen{\Theta_1}\dual\oplus\cdots\oplus  \gen{\Theta_K}\dual=\gen{R}\dual.
\]
Then we have the following:
\begin{proposition}
We have $v_{\gamma}\in N\spm$.
\end{proposition}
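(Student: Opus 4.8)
The plan is to show that $v_\gamma$ lies in the correct coset modulo $\gen{R}$, namely the codeword $\gamma$ itself, and then to invoke the fact that $\Nm$ is precisely the preimage of the code under reduction modulo $\gen{R}$. Everything then reduces to checking that the canonical representatives assemble into a representative of $\gamma$.

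First I would record the structural fact underlying the argument: since $\Nm$ is even and unimodular, we have $\Nm=\Nm\dual$, and the inclusion $\gen{R}\subseteq \Nm$ dualizes to $\Nm=\Nm\dual\subseteq \gen{R}\dual$. Thus $\gen{R}\subseteq \Nm\subseteq \gen{R}\dual$, so $\Nm$ is an intermediate lattice corresponding, under the usual bijection between intermediate lattices and subgroups of the discriminant group, exactly to the code $\Nm/\gen{R}\subseteq \gen{R}\dual/\gen{R}$. In other words, for $x\in \gen{R}\dual$ one has $x\in \Nm$ if and only if $(x\bmod\gen{R})$ lies in the code. This reduces the proposition to checking that $v_\gamma\bmod\gen{R}$ equals the image of $\gamma$.

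Next I would unwind the definitions componentwise. Under the orthogonal decomposition $\gen{R}=\gen{\Theta_1}\oplus\cdots\oplus\gen{\Theta_K}$ and the corresponding decomposition of dual lattices in~\eqref{eq:dualembedding}, the reduction map $\gen{R}\dual\to \gen{R}\dual/\gen{R}$ splits as a product of the reductions $\gen{\Theta_i}\dual\to A_i$, so that $\gen{R}\dual/\gen{R}=A_1\times\cdots\times A_K$ and the embedding~\eqref{eq:codeembedding} sends $\gamma$ to $(\gamma_1,\dots,\gamma_K)$. By Definition~\ref{def:canonical0} together with Lemma~\ref{lem:Jbij}, each canonical representative $\tilde\gamma_i\in\gen{\Theta_i}\dual$ is a genuine representative of its coset, that is, $\tilde\gamma_i\bmod\gen{\Theta_i}=\gamma_i$: this holds trivially when $\gamma_i=0$, and for $\gamma_i\ne0$ it is exactly the content of the bijection $j\mapsto r_j\dual\bmod\gen{\Theta_i}$. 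Summing over $i$ then gives $v_\gamma\bmod\gen{R}=(\gamma_1,\dots,\gamma_K)=\gamma$, and by the previous paragraph this places $v_\gamma$ in $\Nm$.

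I do not expect a genuine obstacle here: the statement is essentially a bookkeeping exercise once the canonical representatives are recognized as representatives of the intended cosets. The only points requiring care are the very first reduction step, where one verifies that $\Nm$ is the \emph{full} preimage of the code (this is precisely where the unimodularity of $\Nm$ enters), and the compatibility of the componentwise identification $\gen{R}\dual/\gen{R}=\prod_i A_i$ with the definition of $v_\gamma$ as $\sum_i\tilde\gamma_i$.
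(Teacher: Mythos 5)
Your proof is correct, and it takes a genuinely different --- and considerably more elementary --- route than the paper's. You note that $\tilde{\gamma}_i \bmod \gen{\Theta_i}=\gamma_i$ is built into Definition~\ref{def:canonical0} via Lemma~\ref{lem:Jbij}, so that $v_\gamma \equiv \gamma \bmod \gen{R}$, and then use the correspondence theorem for the chain $\gen{R}\subseteq \Nm\subseteq \gen{R}\dual$ to conclude that $\Nm$ is the full preimage of the code and hence contains $v_\gamma$. This is sound; the only cosmetic remark is that the inclusion $\Nm\subseteq\gen{R}\dual$ already follows from the integrality of $\Nm$ together with $\gen{R}\subseteq\Nm$, so unimodularity is not strictly needed even at that step. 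The paper argues quite differently: it transports the Leech root $s_{\gamma}\sprime\in\XXi_1(c)$ across the isometry~\eqref{eq:LLeechLN}, writes the image as $s_\gamma=(a,1,u_\gamma)_N$ with $u_\gamma\in\Nm$, and uses Proposition~\ref{prop:thetais} to identify each component of $u_\gamma$ under~\eqref{eq:dualembedding} with the canonical representative $\tilde{\gamma}_i$, whence $v_\gamma=u_\gamma\in\Nm$. What the longer route buys is not the membership statement itself but the identification $s_\gamma=(-1-n_\gamma/2,\,1,\,v_\gamma)_N$ as a Leech root of $w_N$, which is precisely the input consumed by the proof of Theorem~\ref{thm:constC}. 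Your argument disposes of the proposition as stated more cheaply, but that identification would still have to be established separately before the main theorem.
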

\begin{proof}
Let $U_N$ be a copy of the hyperbolic plane $U$, and we put
\[
L_N:=U_N\oplus N\spm.
\]
The intersection form on $L_N$ is denoted by $\intf{\phantom{a}, \phantom{a}}_L$.
Let $u_0, u_1$ be the basis of $U_N$ given in Example~\ref{example:U}.
A vector of $L_N$ is written as 
\begin{equation}\label{eq:LNvect}
(a, b, v)_N=a u_0 + b u_1 +v,
\quad\textrm{where $a, b\in \Z$ and $v\in \Nm$.}
\end{equation}
By $v\mapsto (0,0,v)_N$, 
we regard $N\spm$ as a sublattice of $L_N$.
In particular, we have $R\subset L_N$ and $\Theta\subset L_N$.
We then put
\[
f_N:=(1,0,0)_N \in L_N, \quad 
z_N:=(-1, 1, 0)_N \in L_N.
\]
Note that $L_N$ is isomorphic to $L_{\Leech}$.
We construct an  isometry $L_{\Leech}\cong L_N$  explicitly by means of deep holes.
Let $c\in \Leech\spm \tensor\Q$ be a deep hole such that $\tau(c)=\tauN$.
Recall from Section~\ref{sec:shape} that 
we have defined subsets $\XXi_0(c)$ and $\XXi_1(c)$ of the set $\LLL(w_{\Leech})$ of the Leech roots of $w_{\Leech}$,
and for a fixed element $z\in \XXi_1(c)$,  we constructed an orthogonal direct-sum decomposition 
\begin{equation*} \label{eq:LLeechUN2}
L_{\Leech}=U(c, z)\oplus N\spm (c, z).
\end{equation*}
Note that $N\spm(c, z)$ is isomorphic to $N\spm$.
By Proposition~\ref{prop:transitive}, after renumbering the connected components $\Theta_1, \dots, \Theta_K$ of 
the simple root system $\Theta$ of $N\spm$  if necessary,
we have an isometry $N\spm (c, z)\cong N\spm$ that maps $\Theta(c, z)_i$ to $\Theta_i$ for $i=1, \dots, K$.
Then we obtain an isometry 
\begin{equation}\label{eq:LLeechLN}
L_{\Leech}\cong L_N
\;\;
\textrm{satisfying}
\;\;
f(c)\mapsto f_N,
\;\;
z\mapsto z_N, 
\;\;
\Theta(c, z)_i\isom \Theta_i\;(i=1, \dots, K).
\end{equation}
For $i=1, \dots, K$, 
let $\mu_i\in \gen{\Theta_i}$ be the highest root with respect to $\Theta_i$.
We put
\[
\theta_i:=(1, 0, -\mu_i)_N.
\]
The $(-2)$-vector $\theta_i\in L_N$ satisfies the following:
\begin{enumerate}[(a)]
\item
$\intf{f_N, \theta_i}_L=0$, $\intf{z_N, \theta_i}_L=1$, 
\item
if $j\ne i$, then $\intf{r, \theta_i}_L=0$ for all $r\in \Theta_j$,
and 
\item $\Theta_i\cup\{\theta_i\}$ form an extended $\ADE$-configuration of type $\tau_i$.
\end{enumerate}
By Remark~\ref{rem:thetaiz},
we see that  $\theta_i\in L_N$ corresponds to $\theta(i, z)\in \XXi_0(c)_i\subset L_{\Leech}$ via the isometry~\eqref{eq:LLeechLN}
given above.
We put
\[
\widetilde{\Theta}_i:=\Theta_i\cup\{\theta_i\}.
\]
Then the isometry~\eqref{eq:LLeechLN} induces a bijection from $\XXi_0(c)_i$ to $\widetilde{\Theta}_i$.
\par
The isometry~\eqref{eq:LLeechLN} induces an isomorphism
\begin{equation}\label{eq:MN}
\code (c, z)=N\spm (c, z)/\gen{\Theta(c,z)} \cong N\spm/\gen{R}.
\end{equation}
Let $s_{\gamma}\sprime\in \XXi_1(c)$ denote the $(-2)$-vector of $L_{\Leech}$ that corresponds to 
the codeword $\gamma \in N\spm/\gen{R}$ via the isomorphism~\eqref{eq:MN} and 
the bijection $ \XXi_1(c) \cong \code (c, z)$ in Proposition~\ref{prop:XXi1M}.
Let $s_{\gamma}$ be the $(-2)$-vector of $L_{N}$ that corresponds to 
the $(-2)$-vector $s_{\gamma}\sprime \in L_{\Leech}$ via the isomorphism~\eqref{eq:LLeechLN}.
Since $\intf{f_N, s_{\gamma}}_L=\intf{f(c), s_{\gamma}\sprime}_L=1$ and $\intf{s_\gamma, s_\gamma}_L=-2$, 
there exists a vector $u_{\gamma}\in N\spm$
such that 
 \[
 s_{\gamma}=(a, 1, u_{\gamma})_N, \quad\textrm{where $a=-1-\intf{ u_{\gamma},  u_{\gamma}}_N\spm/2$}.
 \]
By Proposition~\ref{prop:thetais} transplanted to $ s_{\gamma} \in L_{N}$ from $ s_{\gamma}\sprime \in L_{\Leech}$, 
we see that the $i$th component of $u_{\gamma}$
by the embedding~\eqref{eq:dualembedding}
 is equal to the canonical representative $\tilde{\gamma}_i$.
 Indeed, Proposition~\ref{prop:thetais} implies that 
 there exists a unique element $r_{\gamma}\in \widetilde{\Theta}_i$ 
 such that $m(r_{\gamma})=1$ and  that, for any $r\in \widetilde{\Theta}_i$, we have 
 \[
 \intf{s_{\gamma}, r}_L=\begin{cases}
 1 & \textrm{if $r=r_{\gamma}$}, \\
 0 & \textrm{otherwise}.
 \end{cases}
 \]
 Hence $s_{\gamma}$ intersects the elements of $\Theta_i$
 with the same intersection numbers as the canonical representative $\tilde{\gamma}_i \in \gen{\Theta_i}\dual$ does.
(If $r_{\gamma}=\theta_i$, then we have $\tilde{\gamma}_i=0$.)
Therefore  we obtain $v_{\gamma}=u_{\gamma}\in N\spm$.
\end{proof}
\begin{definition}\label{def:canonical1}
We call $v_{\gamma}\in N\spm$ the \emph{canonical representative} of $\gamma \in N\spm/\gen{R}$.
\end{definition}
Recall from Section~\ref{sec:Niemeier} that $h$ is the Coxeter number of $N\spm$ 
and that $\rho\in N\spm\tensor\Q$ is the Weyl vector of $N\spm$
with respect to $\Theta$.
\begin{proposition}\label{prop:wN}
The vector 
\[
w_N:=(h+1, h, \rho)_N
\]
corresponds to the Weyl vector $w_{\Leech}\in L_{\Leech}$ via the isometry~\eqref{eq:LLeechLN}.
In particular, the lattice $(\Z  w_N)\sperp/ \Z  w_N$ is isomorphic to $\Leech\spm$.
\end{proposition}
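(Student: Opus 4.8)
The plan is to verify the correspondence by comparing intersection numbers rather than by computing the image of $w_{\Leech}$ explicitly. The isometry~\eqref{eq:LLeechLN} is determined by its values on a spanning set, and it sends $f(c)\mapsto f_N$, $z\mapsto z_N$, and $\Theta(c,z)_i$ onto $\Theta_i$. The $24$ vectors of $\Theta(c,z)$ form a $\Q$-basis of $N\spm(c,z)\tensor\Q$, while $f(c)$ and $z$ span $U(c,z)\tensor\Q$; by the orthogonal decomposition $L_{\Leech}=U(c,z)\oplus N\spm(c,z)$, the set $\{f(c),z\}\cup\Theta(c,z)$ is therefore a $\Q$-basis of $L_{\Leech}\tensor\Q$, mapped onto the $\Q$-basis $\{f_N,z_N\}\cup\Theta$ of $L_N\tensor\Q$. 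Since an isometry preserves the intersection form, the image of $w_{\Leech}$ is the unique vector of $L_N\tensor\Q$ whose intersection numbers with $f_N$, $z_N$, and each $r\in\Theta$ match those of $w_{\Leech}$ with $f(c)$, $z$, and the corresponding element of $\Theta(c,z)$. Hence it suffices to verify that $w_N$ realizes these prescribed values.

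First I would record the numbers on the $L_{\Leech}$ side. Since $w_{\Leech}=(1,0,0)_{\Leech}$ and $\bar{f}(c)$ has middle coordinate $1$, we get $\intf{w_{\Leech},\bar{f}(c)}_L=1$, so $\intf{w_{\Leech},f(c)}_L=h$ because $f(c)=h\bar{f}(c)$. Moreover $z\in\XXi_1(c)$ and every $r\in\Theta(c,z)\subset\XXi_0(c)$ lies in $\LLL(w_{\Leech})$, whence $\intf{w_{\Leech},z}_L=1$ and $\intf{w_{\Leech},r}_L=1$ for all $r\in\Theta(c,z)$ by the definition of $\LLL(w_{\Leech})$.

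Next I would compute the matching numbers on the $L_N$ side directly from $w_N=(h+1,h,\rho)_N$ using the Gram matrix of $U_N$ fixed in Example~\ref{example:U}. This gives $\intf{w_N,f_N}_L=h$ and $\intf{w_N,z_N}_L=(h+1)-h=1$. For $r\in\Theta_i$, written $(0,0,r)_N$, the $U_N$-part drops out, so $\intf{w_N,(0,0,r)_N}_L=\intfNm{\rho,r}$; since $\rho=\rho_1+\cdots+\rho_K$ with $\rho_j$ orthogonal to $\Theta_i$ for $j\ne i$, property~(b) of the Coxeter number in Definition~\ref{def:Coxeter number} yields $\intfNm{\rho,r}=\intfNm{\rho_i,r}=1$. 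All three families of values agree with the Leech side under the correspondence $\Theta(c,z)_i\isom\Theta_i$, so $w_N$ is the image of $w_{\Leech}$. The vector $w_N$ lies in $L_N$ itself, not merely in $L_N\tensor\Q$, because $\rho\in N\spm$ by Remark~\ref{rem:rho}. Finally, the ``in particular'' assertion is immediate: the isometry carries $(\Z w_{\Leech})\sperp$ onto $(\Z w_N)\sperp$ and descends to an isometry of the quotients, so $(\Z w_N)\sperp/\Z w_N\cong(\Z w_{\Leech})\sperp/\Z w_{\Leech}\cong\Leech\spm$, the last isomorphism being part of the definition of a Weyl vector.

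The computation presents no real obstacle; the only points requiring care are the sign conventions forced by the negative-definite lattices $N\spm$ and $\Leech\spm$ (so that $\intfNm{\rho_i,r}=+1$), and the check that the chosen spanning set is genuinely a $\Q$-basis, which I handle via the decomposition $L_{\Leech}=U(c,z)\oplus N\spm(c,z)$.
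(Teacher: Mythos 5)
Your proof is correct and follows essentially the same route as the paper: both identify the image of $w_{\Leech}$ under the isometry~\eqref{eq:LLeechLN} by matching its intersection numbers against a spanning set of $L_N\tensor\Q$ whose preimages are known. The only (harmless) difference is the choice of spanning set: you use the $\Q$-basis $\{f_N, z_N\}\cup\Theta$ together with the value $\intf{w_{\Leech}, f(c)}_L=h$, whereas the paper uses the Leech roots $z_N$, $\Theta$, and $\theta_i=(1,0,-\mu_i)_N$, which all pair to $1$ with the Weyl vector but require property~(c) of the Coxeter number to handle the $\theta_i$.
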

\begin{remark}\label{rem:rho2}
The vector $w_N$ appeared in Borcherds~\cite{Borcherds1985}.
Proposition~\ref{prop:wN} gives a proof that $\rho\in N\spm\tensor\Q$ is in fact $\rho\in N\spm$.
\end{remark}
\begin{proof}
Note that equality~\eqref{eq:rhonorm}  implies $\intf{w_N, w_N}_L=0$.
From defining property  (b) of $\rho_i$   and property (c) of $h=h_i$   in Definition~\ref{def:Coxeter number}, we see that 
the vector  $w_N \in L_{N}\tensor \Q$
satisfies 
$\intf{w_N, z_N}_L=1$  and 
\[
\quad \intf{w_N, r}_L=1 \quad\textrm{for all $r\in \Theta$}, \qquad 
\intf{w_N, \theta_i}_L=1 \quad\textrm{for $i=1, \dots, K$}.
\]
Since $z_N$, $r$ ($r\in \Theta$) and $\theta_i$ ($i=1, \dots, K$) span $L_{N}\tensor \Q$ and correspond,
via the isometry~\eqref{eq:LLeechLN}, 
 to Leech roots of $w_{\Leech}$, 
we see that $w_N$  corresponds to $w_{\Leech}$ 
via the isometry~\eqref{eq:LLeechLN}.
\end{proof}
\begin{theorem}\label{thm:constC}
Let $\gamma$ be a codeword of the code $\Nm/\gen{R}$, 
and let $v_\gamma\in \Nm$ be the canonical representative of $\gamma$.
We put
\[
n_\gamma:=\intfNm{v_\gamma,v_\gamma},
\quad
a_\gamma:=2h +1+ h\,n_\gamma/2.
\]
We define linear forms $\alpha_0\colon \Nm\to \Q$ and $\alpha_1\colon \Nm\to \Q$ by 
\begin{eqnarray*}
\alpha_0(u) &:=&\intfNm{h v_\gamma -\rho, u}/a_{\gamma},  \\
\alpha_1(u) &:=&\left(1+n_{\gamma}/2\right)\alpha_0(u)-\intfNm{v_\gamma, u}, 
\end{eqnarray*}
and put 
\[
\newLeech\spm(\gamma):=\set{u\in \Nm}{\alpha_0(u)\in \Z}.
\]
Then the $\Z$-module $\newLeech\spm (\gamma)$ with the intersection form
\begin{equation}\label{eq:newform}
\intf{u, u\sprime}:=\intfNm{u, u\sprime}+\alpha_0 (u) \alpha_1(u\sprime)+\alpha_1 (u) \alpha_0(u\sprime)
\end{equation}
is isomorphic to the negative-definite Leech lattice $\Leech\spm$.
\end{theorem}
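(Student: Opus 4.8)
The plan is to realize $\newLeech\spm(\gamma)$ as the Leech lattice $(\Z w_N)\sperp/\Z w_N$ inside $L_N = U_N \oplus N\spm$, using Proposition~\ref{prop:wN} which already tells us that this quotient is isomorphic to $\Leech\spm$. First I would set $w_N = (h+1, h, \rho)_N$ and study the orthogonal complement $(\Z w_N)\sperp$ in $L_N\tensor\Q$. Since $w_N$ is isotropic, the quotient $(\Z w_N)\sperp/\Z w_N$ carries a well-defined nondegenerate form, and by Proposition~\ref{prop:wN} it is $\Leech\spm$. The key idea is to exhibit an explicit isomorphism between $\newLeech\spm(\gamma)$ and $(\Z w_N)\sperp/\Z w_N$ that carries the form~\eqref{eq:newform} to the induced form on the quotient. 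The vector $s_\gamma = (a, 1, v_\gamma)_N$ (with $a = -1 - n_\gamma/2$), constructed in the proof of the previous proposition, is the natural auxiliary vector pairing to $w_N$; computing $\intf{w_N, s_\gamma}_L$ using $\intfNm{\rho, v_\gamma}$ and the defining data of $h$ should produce the scalar $a_\gamma$ up to sign, which explains the normalization by $a_\gamma$ in $\alpha_0$.

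Concretely, I would define a map $\Phi\colon \newLeech\spm(\gamma) \to (\Z w_N)\sperp/\Z w_N$ as follows. For $u \in \newLeech\spm(\gamma) \subset N\spm$, the condition $\alpha_0(u) \in \Z$ is exactly the integrality needed to correct $u$ by integer multiples of $w_N$ and $s_\gamma$ (or of $f_N = (1,0,0)_N$ and $s_\gamma$) so that the result lands in $(\Z w_N)\sperp$. I expect the lift to take the shape
\[
u \;\longmapsto\; \alpha_1(u)\, f_N \;-\; \alpha_0(u)\, s_\gamma \;+\; u \pmod{\Z w_N},
\]
or a close variant, where the coefficients $\alpha_0(u), \alpha_1(u)$ are precisely engineered so that the pairing against $w_N$ vanishes. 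The two conditions $\intf{w_N, f_N}_L$ and $\intf{w_N, s_\gamma}_L$ determine these coefficients linearly, and solving that $2\times 2$ system is what produces the formulas for $\alpha_0$ and $\alpha_1$ in the statement; the appearance of $1 + n_\gamma/2$ in $\alpha_1$ should come from the $u_0$-coefficient $a$ of $s_\gamma$. I would then verify that $\Phi$ is a well-defined bijective $\Z$-module homomorphism: surjectivity follows because $f_N, s_\gamma, $ and $N\spm$ together span enough of $L_N$, and the integrality constraint $\alpha_0(u)\in\Z$ matches exactly the image lattice in the quotient.

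The decisive computation is the form-matching step. I would compute the induced intersection form on the quotient by evaluating $\intf{\Phi(u), \Phi(u\sprime)}_L$ in $L_N$, using $\intf{f_N, f_N}_L = 0$, $\intf{s_\gamma, s_\gamma}_L = -2$, the values of $\intf{f_N, s_\gamma}_L$, $\intf{f_N, u}_L$, $\intf{s_\gamma, u}_L$ for $u \in N\spm$, and the fact that reduction mod $\Z w_N$ kills contributions proportional to $w_N$. After expansion, the cross terms involving $f_N$ and $s_\gamma$ should collect into exactly $\alpha_0(u)\alpha_1(u\sprime) + \alpha_1(u)\alpha_0(u\sprime)$, leaving $\intfNm{u,u\sprime}$ as the remaining piece; this reproduces~\eqref{eq:newform}. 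The main obstacle will be bookkeeping the scalar $a_\gamma = 2h+1+hn_\gamma/2$ correctly: I anticipate it arises as $\intf{w_N, s_\gamma}_L$ (or its negative), so I must check that the chosen lift of $u$ is genuinely orthogonal to $w_N$ and that the normalization by $a_\gamma$ in $\alpha_0$ is consistent with integrality of the quotient lattice. I would confirm $a_\gamma \neq 0$ for all codewords (so $\alpha_0$ is well-defined) and cross-check the whole identification against the explicit deep-hole data referenced in Remark~\ref{rem:canonicalreps}, which provides an independent computational verification.
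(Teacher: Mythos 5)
Your overall strategy is the paper's: use Proposition~\ref{prop:wN} to know that $w_N^{\perp}/\Z w_N\cong\Leech\spm$, bring in the Leech root $s_\gamma=(-1-n_\gamma/2,\,1,\,v_\gamma)_N$, and identify $\newLeech\spm(\gamma)$ with a cross-section of that quotient via the linear forms $\alpha_0,\alpha_1$. The paper phrases this as the orthogonal projection of the sublattice $U(w_N,s_\gamma)\sperp\subset L_N$ onto $N\spm$, which is the inverse of your lift map, so there is no real difference of route. However, two concrete points in your sketch are wrong and would derail the computation if carried out as written. First, $a_\gamma$ is \emph{not} $\pm\intf{w_N,s_\gamma}_L$: since $s_\gamma$ corresponds to a Leech root of $w_{\Leech}$, we have $\intf{w_N,s_\gamma}_L=1$. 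The constant $a_\gamma=2h+1+h n_\gamma/2$ arises instead as (minus) the determinant of the $2\times 2$ system expressing that $(x,y,u)_N$ is orthogonal to \emph{both} $w_N$ and $s_\gamma$, namely $xh+y(h+1)+\intfNm{\rho,u}=0$ and $x+y(-1-n_\gamma/2)+\intfNm{v_\gamma,u}=0$, whose determinant is $h(-1-n_\gamma/2)-(h+1)=-a_\gamma$; solving gives $y=\alpha_0(u)$, $x=\alpha_1(u)$.

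Second, your proposed lift $u\mapsto \alpha_1(u)f_N-\alpha_0(u)s_\gamma+u$ is not the right one: its $N\spm$-component is $u-\alpha_0(u)v_\gamma$ rather than $u$, and one checks it is not orthogonal to $w_N$ (the condition it would impose is $h\alpha_1(u)-\alpha_0(u)+\intfNm{\rho,u}=0$, which fails for the stated $\alpha_0,\alpha_1$). Relatedly, imposing only orthogonality to $w_N$ is a single linear condition and cannot determine both coefficients, contrary to what you assert; the second equation you need is orthogonality to $s_\gamma$. The correct lift is simply $u\mapsto(\alpha_1(u),\alpha_0(u),u)_N$, which lands in $U(w_N,s_\gamma)\sperp$, and then the form computation is immediate: $\intf{(\alpha_1(u),\alpha_0(u),u)_N,(\alpha_1(u'),\alpha_0(u'),u')_N}_L=\alpha_1(u)\alpha_0(u')+\alpha_0(u)\alpha_1(u')+\intfNm{u,u'}$, which is exactly~\eqref{eq:newform} with no cancellation to track. (One should also note that $\alpha_0(u)\in\Z$ forces $\alpha_1(u)\in\Z$ because $n_\gamma$ is even and $\intfNm{v_\gamma,u}\in\Z$, which is why the single condition $\alpha_0(u)\in\Z$ cuts out the image of the projection.) With these corrections your argument coincides with the paper's proof.
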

\begin{proof}
Recall that $s_{\gamma}=(-1-n_{\gamma}/2, 1, v_{\gamma})_N \in L_{N}$ is the vector corresponding, via  the isometry~\eqref{eq:LLeechLN}, 
to the Leech root
$s_{\gamma}\sprime \in \XXi_1(c)$ of $w_{\Leech}$.
Let $U(w_N, s_{\gamma})$ be the hyperbolic plane in $L_N$ generated by $w_N$ and $s_{\gamma}$.
Since $w_N$ is a Weyl vector of $L_N$, 
the orthogonal complement  $U(w_N, s_{\gamma})\sperp\cong (\Z w_N)\sperp / \Z w_N$ is isomorphic to the negative-definite Leech lattice $ \Leech\spm$.
A vector $(x, y, u)_N$ of $L_N\tensor \Q$ is orthogonal to both of $w_N$ and $s_\gamma$ 
if and only if
\[
x=\alpha_1(u), \quad y=\alpha_0(u).
\]
Hence the image of the orthogonal projection 
\[
U(w_N, s_{\gamma})\sperp\inj L_N=U_N\oplus N\spm \to N\spm
\]
is equal to the $\Z$-submodule $\newLeech\spm (\gamma)$ of $N\spm$,
and the restriction of $\intf{\phantom{a}, \phantom{a}}_L$ to $U(w_N, s_{\gamma})\sperp$ 
gives rise  to the intersection form~\eqref{eq:newform} on $\newLeech\spm (\gamma)$.
\end{proof}
\begin{remark}
In terms of $\XX$, 
the construction above is described as follows.
The sublattice $U_N\subset L_N$ yields a Jacobian fibration of $\XX$
by the isometry $L_N\cong L_{\Leech}=S_{\XX}$ given in~\eqref{eq:LLeechLN}, and 
the $(-2)$-vector $s_{\gamma}$ is the class of the image of the element 
of the Mordell-Weil group corresponding to $\gamma$.
\end{remark}
Considering the case where $\gamma=0$ in Theorem~\ref{thm:constC}, 
and changing signs of  intersection forms of lattices,
we obtain Corollary~\ref{cor:zerogamma} in Introduction.
\begin{remark}\label{rem:canonicalreps}
Since the Leech lattice $\Leech\spm$ is characterized, up to isomorphism,  as the unique even unimodular negative-definite  lattice of rank $24$
with no roots~(see~\cite{Conway1969Note}), we can confirm Theorem~\ref{thm:constC} by direct computation, 
once  we  compute  canonical representatives $v_{\gamma}$ of codewords $\gamma\in N\spm/\gen{R}$ explicitly.
\par
The canonical representatives are computed as follows.
The set of $(-2)$-vectors $s_{\gamma}\in L_N$, where $\gamma$ runs through $N\spm/\gen{R}$,  is equal to
\[
\set{r\in L_N}{\;\intf{f_N, r}_L=\intf{w_N, r}_L=1, \;\intf{r,r}=-2\;},
\]
and,  as was explained in Remark~\ref{rem:affinesubspace}, 
this set can be computed easily
as the set of  integer solutions of a negative-definite inhomogeneous quadratic form. 
The set of canonical representatives $v_{\gamma}$ is then obtained from this set  by the projection $L_N\to N\spm$. 
\par
We can also use the 
algorithm for the study of elliptic $K3$ surfaces 
described  in~\cite[Section 4]{Shimada2022}. 
See Remark~\ref{rem:Shimada2022}.
\end{remark}
\bibliographystyle{plain}
\bibliography{myrefsAut}
\end{document}